\documentclass{amsart}
\usepackage{fullpage}
\usepackage{epsfig, amsmath,xspace}
\usepackage{amssymb,amscd}
\usepackage[all,cmtip]{xy}
\usepackage{hyperref}

\newtheorem{thm}{Theorem}[section]
\newtheorem{lem}[thm]{Lemma}
\newtheorem{prop}[thm]{Proposition}
\newtheorem{cor}[thm]{Corollary}
\theoremstyle{definition}
\newtheorem{defn}[thm]{Definition}

\newtheorem{ex}[thm]{Example}
\newtheorem{nota}[thm]{Notation}

\theoremstyle{remark}
\newtheorem{rem}[thm]{Remark}
\numberwithin{equation}{section}

\newdir{ >}{{}*!/-7pt/@{>}}
\newcommand{\dgmod}{{\sf dgmod}}

\newcommand{\Hom}{\mathrm{Hom}}
\newcommand{\End}{\mathrm{End}}
\newcommand{\Ker}{\mathrm{Ker}}
\newcommand{\id}{\mathrm{id}}
\newcommand{\im}{\mathrm{Im}}
\def\cal{\mathcal}

\def\O{\mathcal{O}}

\def\Z{\mathbb{Z}}
\def\N{\mathbb{N}}

\def\dd{\mathrm{d}}

\def\id{\mathrm{id}}
\def\leq{\leqslant}
\def\geq{\geqslant}

\def\rto{\rightarrow}

\newcommand{\epi}{\twoheadrightarrow}

\begin{document}
\title[Pre-Lie systems and obstruction to $A_\infty$-structures over a ring]{Pre-Lie systems and obstruction to $A_\infty$-structures over a ring}
\author{Muriel Livernet}
\address{Universit\'e Paris 13, CNRS, UMR 7539 LAGA, 99 avenue
  Jean-Baptiste Cl\'ement, 93430 
  Villetaneuse, France}
\email{livernet@math.univ-paris13.fr}
\keywords{Obstruction theory, $A_\infty$-structures, pre-Lie systems}
\subjclass[2000]{16E45, 16E40, 55G35}
\date{\today}
\begin{abstract}The aim of the note is to prove an obstruction theorem for $A_\infty$-structures over a commutative ring $R$. Given a $\Z$-gra\-ded $A_m$-algebra, with $m\geq 3$, we give conditions on the  Hochschild cohomology of the associative algebra $H(A)$ so that the $A_{m-1}$-structure can be lifted to an $A_{m+1}$-structure. 
These conditions apply in case we start with an associative algebra up to homotopy and want to lift this structure to an $A_\infty$-structure. The hidden purpose of the note is to show
that there are no assumptions needed on the commutative ring $R$ nor bounded assumptions on the complex $A$.
\end{abstract}
\maketitle

\section*{Introduction}

The purpose of this note is to fill a gap in the litterature concerning $A_\infty$-structures in the category of differential $\Z$-graded $R$-modules when $R$ is a commutative ring. We are concerned with obstruction theory for the existence of an $A_\infty$-structure on a dgmodule $V$ endowed with a product which is associative up to homotopy. We answer the question of the existence of higher homotopies, in terms of the Hochschild cohomology of the associative algebra $H_*(V)$. In the context of $A_\infty$-spaces or $A_\infty$-spectra this question has been answered by Robinson in \cite{Rob89}. If one applies the zig-zag of equivalences   between the category of modules over the Eilenberg Mac Lane ring spectrum $HR$ and the category of differential graded $R$-modules described by Shipley in \cite{Shipley07}, one gets the result we want. Our purpose is to give a direct account of the method in the differential graded context.  Note that the question has also been studied by Lef\`evre-Hasegawa for minimal $A_\infty$-algebras on a field in \cite{Lefevre03}. We follow the lines of his approach.

We recall that $A_\infty$-structures were defined by Stasheff in \cite{Stasheff63} for spaces, in order to give a recognition principle for loop spaces. In order to do so, he built an operad based on  associahedra. The simplicial chain complex of this operad is
what is known as the $A_\infty$-operad in the category of differential graded modules. Algebras over this operad are called $A_\infty$-algebras. Kadesishvili studied in \cite{Kadei80} an obstruction theory for the uniqueness of $A_\infty$-structures, also in terms of the Hochschild
cohomology of $H_*(A)$, when $A$ is an $A_\infty$-algebra. In this note we study the existence rather than the uniqueness of such a structure.

In the process of building an obstruction theory for the existence of $A_\infty$-structures on a ring $R$ we encountered two assumptions commonly needed on $R$-modules.
The first one is the assumption that every $R$-module considered should have no $2$-torsion. We discovered that this hypothesis is not needed, if one takes a closer look at the Lie algebra structure usually used to solve obstruction issues. Indeed, in our context, the Lie algebra structure not only comes from a pre-Lie algebra structure but from a pre-Lie system as defined by Gerstenhaber in \cite{Ger63}. The first section of the note is concerned with pre-Lie systems.
The second  assumption is that every graded module over $R$ should be $\N$-graded. Again this hypothesis is not needed, and we prove in the second section that, under some projectivity conditions, there is an isomorphism
between $H(\Hom(C,D))$ and $\Hom(H(C),H(D))$ for any differential $\Z$-graded $R$-modules $C$ and $D$.
The last section is devoted to obstruction theory.

\medskip

\noindent{\bf Notation.} We work over a commutative ring $R$. 

We denote by $\dgmod$ the category of lower $\Z$-gra\-ded $R$-modules with a differential of degree $-1$. Objects in this category are called dgmodules for short. 
\begin{itemize}
\item
The category $\dgmod$ is symmetric monoidal for the tensor product
$$(C\otimes D)_n=\oplus_{i+j=n} C_i\otimes_R D_j$$ with the differential given by
$$\partial(c_i\otimes d_j)=\partial_C(c_i)\otimes d_j+(-1)^i c_i\otimes\partial_D(d_j),\quad\forall c_i\in C_i, d_j\in D_j.$$
\item For $C$ a dgmodule, we denote by $sC$ its suspension, that is, $(sC)_i=C_{i-1}$ with differential
$\partial(sc)=-s\partial_C(c)$. 
\item Let $C$ be a dgmodule. For $c\in C_i$ and $d\in C_j$,  we will use the notation $\epsilon_{c,d}=(-1)^{ij}.$
\item Let $C$ and $D$ be dgmodules. We denote by $\Hom(C,D)$ the dgmodule
$$\Hom_i(C,D)=\prod_n \Hom_R(V_n,W_{n+i})$$
with differential
$\partial: \Hom_i(C,D)\rightarrow\Hom_{i-1} (C,D)$
defined for $c\in C_n$ by
$$(\partial f)_n(c)=\partial_D(f_n(c))-(-1)^i f_{n-1}(\partial_C c).$$
\item We use the Koszul sign rule: let $C,C',D$ and $D'$ be dgmodules; for $f\in \Hom(C,D)$ and $g\in \Hom(C',D')$ the map $f\otimes g\in \Hom(C\otimes C',D\otimes D')$ is defined by
$$\forall x\in C\otimes C', (f\otimes g)(x\otimes y)=\epsilon_{x,g} f(x)\otimes g(y).$$
\item The suspension map $s: C\rto (sC)$ has degree $+1$. The Koszul sign rule implies that
$$(s^{-1})^{\otimes n}\circ s^{\otimes n}=(-1)^{\frac{n(n-1)}{2}}\; \id_{C^{\otimes n}}.$$

\end{itemize}

\medskip

\noindent{\bf Aknowledgment.} I am indebted to Benoit Fresse, Birgit Richter and Sarah Whitehouse for valuable discussions.

\section{pre-Lie systems and graded pre-Lie algebras} \label{sec:preLie}

 Pre-Lie systems and pre-Lie algebras have been introduced by M. Gerstenhaber in \cite{Ger63}, in order to understand the richer algebra structure on the complex computing the Hochschild cohomology of an associative algebra $A$, yielding to the ``Gerstenhaber structure" on the Hochschild cohomology of $A$. In this section we review some of the results of Gerstenhaber, together with variations on the gradings and signs involved. Namely, different pre-Lie structures, as in Proposition  \ref{P:unsignedcirc}  and in Theorem \ref{P:end}, are described from a given pre-Lie system, depending on the grading we choose.
 
The main result of the section is the technical Lemma \ref{L:preLie_formula}, allowing to use pre-Lie systems on a ring with no assumptions concerning the $2$-torsion. It is one of the key ingredient in the proof of the obstruction Theorem \ref{T:main}.

Throughout the section we are given a $(\Z,\N)$-bigraded $R$-module $\bigoplus\limits_{n\in\N,i\in\Z}{\O}_i^n$. The examples we have in mind are  
\begin{itemize}
\item $\End^n_i(V)=\Hom_i(V^{\otimes n},V)$, for a dgmodule $V$.
\item More generally ${\O(n)}_i$, for an operad  $\O$, symmetric or not, see {\it e.g.} \cite{KapMan01}.
\item $\Hom(\cal C,\cal P)$ for given (non-symmetric) cooperad $\cal C$ and operad $\cal P$ or $\Hom_{\mathbb S}(\cal C,\cal P)$ for a cooperad $\cal C$ and an operad $\cal P$ where $\Hom_{\mathbb S}$ is the subset of $\Hom$ of invariant maps under the action of the symmetric group. This example is an application of the previous one, since $\Hom(\cal C,\cal P)$ forms an operad, the {\it convolution operad}, as defined by Berger and Moerdijk in \cite{BerMoe03}.
\end{itemize}
The paper will deal with the first item. It may be understood as the toy model for obstruction theory for $\O_\infty$-algebras, where
$\O$ is a Koszul operad. If one would like to extend the result of this paper for operads, one would use the third item, as suggested in the book in progress of Loday and Vallette \cite{LodVal}.

\begin{nota} Let $\cal O$ be a $(\Z,\N)$-bigraded $R$-module. For $x\in \cal O^n_i$, the integer $n$ is called the {\sl arity} of $x$, the integer $i$ is called the {\sl degree} of $x$ and the integer $i+n-1$ is called the {\sl weight} of $x$ and denoted by $|x|$. 
For a fixed $n$, we consider the $\Z$-graded $R$-module $\cal O^n=\oplus_i \cal O^n_i$.
\end{nota}

\subsection{General case}

\begin{defn}\label{D:preLiesystem} Let $\mathcal O$ be a $(\Z,\N)$-bigraded $R$-module. A {\it graded pre-Lie system} on $\O$ is a sequence of maps, called composition maps,
$$\circ_k: \O^n_i\otimes \O^m_j \rightarrow \O^{n+m-1}_{i+j},\ \forall 1\leq k\leq n,$$
satisfying the relations: for every $f\in {\O}^n_i, g\in {\O}^m_j$ and $h\in {\O}^p_l$ 
\begin{eqnarray*}
 f\circ_u (g\circ_v h)&=&(f\circ_u g)\circ_{v+u-1} h,  \ \ \forall 1\leq u\leq n \text{ and } 1\leq v\leq m, \\
(f\circ_u g)\circ_{v+m-1} h&=&(-1)^{jl}(f\circ_v h)\circ _u g, \ \  \forall 1\leq u<v\leq n. 
\end{eqnarray*}
We will denote by $(\cal O,\circ)$ a graded pre-Lie system.
\end{defn}

\begin{defn}\label{D:wpreLiesystem} Let $\mathcal O$ be a $(\Z,\N)$-bigraded $R$-module. A {\it weight graded pre-Lie system} on $\O$ is a sequence of maps, called composition maps,
$$\circ_k: \O^n_i\otimes \O^m_j \rightarrow \O^{n+m-1}_{i+j},\ \forall 1\leq k\leq n$$
satisfying the relations: for every $f\in {\O}^n_i, g\in {\O}^m_j$ and $h\in {\O}^p_l$ 
\begin{eqnarray} \label{E:syst1}
 f\circ_u (g\circ_v h)&=&(f\circ_u g)\circ_{v+u-1} h,\ \ \forall 1\leq u\leq n \text{ and } 1\leq v\leq m, \\
(f\circ_u g)\circ_{v+m-1} h&=&(-1)^{(j+m-1)(l+p-1)}(f\circ_v h)\circ _u g,\ \  \forall 1\leq u<v\leq n. \label{E:syst2}
\end{eqnarray}

Note that the composition maps preserve the weight grading.
\end{defn}

A short computation proves the following Proposition.

\begin{prop}\label{P:graded2weight} Any graded pre-Lie system gives rise to a weight graded pre-Lie system and vice versa. Namely, 
if $(\cal O,\star)$ is a graded pre-Lie system, then the collection
$$\circ_k :\O^n_i\otimes \O^m_j \rightarrow \O^{n+m-1}_{i+j},\ \forall 1\leq k\leq n$$ defined by
$$f \circ_k g=(-1)^{(j+m-1)(n-1)+(m-1)(k-1)} f\star_k g$$
is a weight graded pre-Lie system.
\end{prop}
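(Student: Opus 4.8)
The plan is to verify the two defining relations \eqref{E:syst1} and \eqref{E:syst2} of a weight graded pre-Lie system by direct substitution, reducing each to the corresponding relation of Definition~\ref{D:preLiesystem} for the graded system $(\O,\star)$ together with a comparison of sign exponents modulo~$2$. Writing $s(f,g,k)=(j+m-1)(n-1)+(m-1)(k-1)$ for $f\in\O^n_i$, $g\in\O^m_j$, so that $f\circ_k g=(-1)^{s(f,g,k)}f\star_k g$, the one piece of bookkeeping that drives everything is that $f\star_k g\in\O^{n+m-1}_{i+j}$: an inner composite feeds a \emph{modified} arity and degree into the outer occurrence of $s$, and it is exactly this substitution that creates the exponents to be matched. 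I would fix $f\in\O^n_i$, $g\in\O^m_j$, $h\in\O^p_l$ throughout.

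For \eqref{E:syst1} I would expand $f\circ_u(g\circ_v h)$ and $(f\circ_u g)\circ_{v+u-1}h$, each as a sign $(-1)^{E_L}$, resp.\ $(-1)^{E_R}$, times the matching $\star$-composite. Since the first relation of Definition~\ref{D:preLiesystem} has the identical combinatorial shape for $\star$, namely $f\star_u(g\star_v h)=(f\star_u g)\star_{v+u-1}h$, the two $\star$-composites coincide, and \eqref{E:syst1} for $\circ$ reduces to the numerical congruence $E_L\equiv E_R\pmod 2$. Expanding both as polynomials in $i,j,l,m,n,p,u,v$ and dropping even coefficients, I expect each to collapse to the same monomial list (for instance $jn+j+mn+m+ln+lm+pn+pm+mu+pu+pv+v$), so they agree.

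For \eqref{E:syst2} the new feature is that the antisymmetry constant must pass from $(-1)^{jl}$, which is what the second relation of Definition~\ref{D:preLiesystem} supplies for $\star$, to the target $(-1)^{(j+m-1)(l+p-1)}$ for $\circ$. Writing the two sides of \eqref{E:syst2} as $(-1)^{A}(f\star_u g)\star_{v+m-1}h$ and $(-1)^{B}(f\star_v h)\star_u g$ and invoking $(f\star_u g)\star_{v+m-1}h=(-1)^{jl}(f\star_v h)\star_u g$, the whole statement collapses to the single congruence $A+jl\equiv B\pmod 2$, where the factor $(j+m-1)(l+p-1)$ now sits inside $B$. Again I would expand $A$ and $B$ and check that the reduced monomial lists coincide.

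The ``vice versa'' then costs nothing: the prefactor $(-1)^{s(f,g,k)}$ is a unit equal to its own inverse and depends only on the arities and degrees of $f$ and $g$, which the transformation leaves untouched, so $f\star_k g=(-1)^{s(f,g,k)}f\circ_k g$ inverts the construction and the very same two congruences $E_L\equiv E_R$ and $A+jl\equiv B$ yield the reverse implication. The only real obstacle is disciplined sign bookkeeping---tracking how each inner composition shifts the arity and degree arguments of $s$---rather than any conceptual point; once the exponents are written out, the verification is a routine mod-$2$ cancellation.
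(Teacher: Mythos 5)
Your proposal is correct and matches the paper's intent: the paper offers no written argument beyond the remark that ``a short computation proves the following Proposition,'' and your direct verification of relations (\ref{E:syst1}) and (\ref{E:syst2}) by sign bookkeeping modulo $2$ is precisely that computation (your reduced exponent list for $E_L$ and the congruence $A+jl\equiv B+(j+m-1)(l+p-1)\pmod 2$ both check out). The observation that the sign prefactor is involutive, so the same congruences give the ``vice versa,'' is also sound.
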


\begin{ex}\label{E:fund} Given a graded operad $\cal O$, the definition of the axioms for partial composition coincides with the one for pre-Lie systems. Hence the collection $\O^n_i=\O(n)_i$ forms a pre-Lie system. 
In particular, let $V$ be a dgmodule. The collection of graded $R$-modules $\End^n(V):=\Hom(V^{\otimes n}, V)$ forms an operad, hence a graded pre-Lie system. Recall that, for $1\leq k\leq n$, the {\sl insertion map at place $k$},
 $\circ_k: \End^n_i(V)\otimes \End^m_j(V)\rightarrow \End^{n+m-1}_{i+j}(V)$ is defined by
$$f\circ_k g=f(\id^{\otimes k-1}\otimes g\otimes \id^{\otimes n-k}).$$
\end{ex}

\begin{defn}
 Let $C\in\dgmod$. A {\sl graded pre-Lie algebra} structure on $C$ is a graded $R$-bilinear map $\circ: C\otimes C\rightarrow C$ satisfying 
\begin{equation}\label{E:preLie}
\forall a,b,c\in C,\quad (a\circ b)\circ c-a\circ(b\circ c)=\epsilon_{b,c}\; a\circ (c\circ b)-\epsilon_{b,c}\; (a\circ c)\circ b.
\end{equation}
\end{defn}

\begin{prop}\label{P:Jacobi}
 Let $C$ be a graded pre-Lie algebra. The bracket defined by
$$\forall c,d\in C, \quad [c,d]=c\circ d-\epsilon_{c,d}\; d\circ c,$$
endows $C$ with a graded Lie algebra structure. Namely, it satisfies the graded antisymmetry and graded Jacobi relations:
\[
[c,d]=-\epsilon_{c,d}\; [d,c], \]
\[\epsilon_{a,c}\; [a,[b,c]]+ \epsilon_{b,a}\; [b,[c,a]]+   \epsilon_{c,b}\; [c,[a,b]]=0.
\]
 \end{prop}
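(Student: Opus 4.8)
The plan is to treat the two assertions separately, dispatching graded antisymmetry by a one-line sign manipulation and reserving the real work for the Jacobi relation. For antisymmetry I would simply expand $-\epsilon_{c,d}[d,c]$: writing $[d,c]=d\circ c-\epsilon_{d,c}\,c\circ d$ and using that $\epsilon_{d,c}=\epsilon_{c,d}$ together with $\epsilon_{c,d}^2=1$, one gets $-\epsilon_{c,d}[d,c]=c\circ d-\epsilon_{c,d}\,d\circ c=[c,d]$. No appeal to \eqref{E:preLie} is needed here; only the symmetry $\epsilon_{x,y}=\epsilon_{y,x}$ and the relation $\epsilon_{x,y}^2=1$ intervene.

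For the Jacobi relation, the first step is to recast \eqref{E:preLie} as a statement about the graded associator $(x,y,z):=(x\circ y)\circ z-x\circ(y\circ z)$; the pre-Lie relation is precisely the assertion that this associator is graded symmetric in its last two arguments, namely $(x,y,z)=\epsilon_{y,z}(x,z,y)$. The strategy is then the classical one. I would expand each of the three nested brackets $[a,[b,c]]$, $[b,[c,a]]$, $[c,[a,b]]$, weighted respectively by $\epsilon_{a,c}$, $\epsilon_{b,a}$, $\epsilon_{c,b}$, into the twelve monomials of the form $x\circ(y\circ z)$ and $(x\circ y)\circ z$, keeping careful track of the Koszul signs produced by $\epsilon_{a,[b,c]}=\epsilon_{a,b}\epsilon_{a,c}$ and its cyclic images. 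I would then use the identity $(x\circ y)\circ z=x\circ(y\circ z)+(x,y,z)$ to rewrite every right-nested monomial as a left-nested one plus an associator.

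After this substitution the argument splits into two independent cancellations. The six purely left-nested monomials $x\circ(y\circ z)$ pair up and vanish for sign reasons alone: each such monomial occurs exactly twice with opposite-signed coefficients, the cancellation using only $\epsilon_{x,y}=\epsilon_{y,x}$ (for instance the two contributions to $a\circ(b\circ c)$ carry coefficients $\epsilon_{a,c}$ and $-\epsilon_{c,a}$). The six leftover associator terms then group, according to their first argument, into three pairs of the shape $\epsilon_{c,a}\bigl[\epsilon_{b,c}(a,c,b)-(a,b,c)\bigr]$ and its two cyclic analogues, each of which is zero by the graded symmetry of the associator established at the outset.

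The main obstacle, and essentially the only place where genuine calculation is required, is the bookkeeping of signs in this twelve-term expansion: one must check that the weighting factors $\epsilon_{a,c}$, $\epsilon_{b,a}$, $\epsilon_{c,b}$ are exactly what is needed both to make the left-nested monomials cancel and to present the associator remainders in symmetric pairs. I expect no conceptual difficulty beyond this. Since the whole expression is visibly invariant under the cyclic substitution $a\to b\to c\to a$ (which also permutes the three weighting signs among themselves), one can halve the labour by computing the contribution of a single bracket carefully and transporting the result around the cycle.
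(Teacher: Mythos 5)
Your argument is correct and, underneath the repackaging, it is essentially the paper's own proof: the paper expands the weighted Jacobi sum into the same twelve monomials and regroups them into three four-term packets (mixing terms coming from different brackets), each packet being one instance of the pre-Lie relation. Your bookkeeping produces exactly the same grouping, processed in two steps rather than one: the two contributions to $a\circ(b\circ c)$ with coefficients $\epsilon_{a,c}$ and $-\epsilon_{c,a}$, together with the associator pair $\epsilon_{c,a}\bigl[\epsilon_{b,c}(a,c,b)-(a,b,c)\bigr]$, involve precisely the four monomials of the paper's first parenthesized packet. What your version buys is that the three residual cancellations become one-line applications of a cleanly stated lemma (graded symmetry of the associator) instead of asking the reader to recognize three instances of \eqref{E:preLie} hidden among twelve terms; what it costs is the intermediate rewriting step. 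One sign issue deserves to be put on record, though it is not a gap on your side: as typeset, \eqref{E:preLie} reads $(a\circ b)\circ c-a\circ(b\circ c)=\epsilon_{b,c}\,a\circ(c\circ b)-\epsilon_{b,c}\,(a\circ c)\circ b$, which asserts that the associator is graded \emph{antisymmetric} in its last two arguments --- the negative of the symmetry you say the relation ``precisely'' expresses. Under that literal reading neither your proof nor the paper's goes through (each four-term packet equals $-2$ times an associator rather than $0$). The symmetric form is, however, the relation actually satisfied by the pre-Lie algebras produced from pre-Lie systems (Proposition \ref{P:unsignedcirc}, Example \ref{E:fund}), and it is the form invoked in the paper's own proofs of Proposition \ref{P:Jacobi} and Lemma \ref{L:preLie_formula}; so \eqref{E:preLie} carries a sign typo (its two right-hand terms should be interchanged), which you have silently corrected exactly as the paper itself does implicitly.
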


\begin{proof}
 The first equation is immediate. The second one relies on the pre-Lie relation (\ref{E:preLie}):
\begin{multline*}
 \epsilon_{a,c}\; [a,[b,c]]+ \epsilon_{b,a}\; [b,[c,a]]+   \epsilon_{c,b}\; [c,[a,b]]=
\epsilon_{a,c}\;(a\circ (b\circ c)-(a\circ b)\circ c-\epsilon_{b,c}\; a\circ(c\circ b)+
\epsilon_{b,c}\; (a\circ c)\circ b) + \\
\epsilon_{b,a}\; (b\circ (c\circ a)-(b\circ c)\circ a-\epsilon_{a,c}\; b\circ (a\circ c)+\epsilon_{a,c}\; (b\circ a)\circ c) 
+\epsilon_{c,b}\; (c\circ (a\circ b)-(c\circ a)\circ b-\epsilon_{a,b}\; c\circ (b\circ a)+\epsilon_{a,b}\; (c\circ b)\circ a)=0.
\end{multline*}

\end{proof}

\begin{prop}[Gerstenhaber \cite{Ger63}]\label{P:unsignedcirc} Any graded pre-Lie system $(\O,\circ)$ gives rise to a graded pre-Lie algebra
$\O_L=\oplus_n \O^n$ with the pre-Lie product given by
$$\begin{array}{cccc}
\star:& \O^n\otimes \O^m\subset \O_L\otimes \O_L&\rto& \O^{n+m-1}\subset \O_L \\
&f\otimes g & \mapsto & f\star g=\sum_{k=1}^n f\circ_k g
\end{array}$$ 
The associated graded Lie structure is denoted by 
$$\{f,g\}=f\star g-(1)^{ij}g\star f,\  \text{ with } f\in \O^n_i,g\in \O^m_j.$$
\end{prop}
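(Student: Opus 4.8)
The proposition asserts both that $(\O_L,\star)$ is a graded pre-Lie algebra and that $\{f,g\}=f\star g-(-1)^{ij}g\star f$ is a graded Lie bracket. The second assertion needs no separate argument: $\{-,-\}$ is precisely the commutator of $\star$, so once the pre-Lie relation \eqref{E:preLie} is in hand, Proposition \ref{P:Jacobi} applies word for word and yields the graded antisymmetry and the graded Jacobi identity. Hence the entire task is to verify \eqref{E:preLie} for $\star$, with $a=f\in\O^n_i$, $b=g\in\O^m_j$, $c=h\in\O^p_l$ and $\epsilon_{g,h}=(-1)^{jl}$, the grading on $\O_L$ being the degree, for which $\star$ is degree-preserving.

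The plan is to expand the associator $(f\star g)\star h-f\star(g\star h)$ into iterated partial compositions and to group the terms by the slot into which $h$ is inserted. From $f\star g=\sum_{k=1}^n f\circ_k g$ I get $(f\star g)\star h=\sum_{k=1}^n\sum_{w=1}^{n+m-1}(f\circ_k g)\circ_w h$, while $f\star(g\star h)=\sum_{u=1}^n\sum_{v=1}^m f\circ_u(g\circ_v h)$. First I would isolate the \emph{nested} terms of the first sum, those with $k\le w\le k+m-1$: writing $w=v+k-1$ and invoking the first axiom \eqref{E:syst1} gives $(f\circ_k g)\circ_{v+k-1}h=f\circ_k(g\circ_v h)$, so summing over $k$ and $v$ reproduces $f\star(g\star h)$ exactly. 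Consequently the associator reduces to the \emph{non-nested} terms alone,
$$(f\star g)\star h-f\star(g\star h)=\sum_{\substack{1\le k\le n\\ w<k\ \text{or}\ w>k+m-1}}(f\circ_k g)\circ_w h.$$

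The heart of the proof is this non-nested sum. Each of its terms records an insertion of $g$ in a slot $a$ and of $h$ in a distinct slot $b$ of $f$, and $(k,w)\mapsto(a,b)$ --- with $a=k$ and $b=w$ or $b=w-m+1$ according as $w<k$ or $w>k+m-1$ --- is a bijection onto the ordered pairs of distinct slots. On each term I would apply the second axiom \eqref{E:syst2}: directly with $(u,v)=(a,b)$ when $a<b$, and, after interchanging the names of $g$ and $h$ (for which $(-1)^{jl}=(-1)^{lj}=\epsilon_{g,h}$), with $(u,v)=(b,a)$ when $a>b$. In both regimes the term is sent to $\epsilon_{g,h}$ times the corresponding term of the non-nested sum for ``$h$ then $g$''. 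Summing over all ordered pairs, and then applying the nested/non-nested split of the previous paragraph with $g$ and $h$ interchanged to identify that sum with $(f\star h)\star g-f\star(h\star g)$, gives
$$(f\star g)\star h-f\star(g\star h)=\epsilon_{g,h}\big[(f\star h)\star g-f\star(h\star g)\big],$$
which is the graded pre-Lie relation \eqref{E:preLie}.

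The step I expect to be the real obstacle is the combinatorial sign and index bookkeeping in the non-nested analysis. One must check that the shifts $+m-1$ and $+p-1$ produced when an insertion crosses a later slot are tracked correctly, that the correspondence $(k,w)\mapsto(a,b)$ is genuinely a bijection so that no nested term leaks into the non-nested count, and --- most delicately --- that \eqref{E:syst2} contributes exactly one factor $\epsilon_{g,h}$ and no stray sign in each of the cases $a<b$ and $a>b$, so that the resulting sign agrees with the convention fixed in \eqref{E:preLie}. By contrast the nested computation and the reduction to Proposition \ref{P:Jacobi} are routine.
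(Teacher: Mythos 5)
Your proposal is correct, but there is nothing in the paper to compare it against: the paper states Proposition \ref{P:unsignedcirc} with an attribution to Gerstenhaber and gives no proof at all. Your argument is the standard (and essentially the only) one, and it checks out: the nested terms of $(f\star g)\star h$ (those with $k\le w\le k+m-1$) reproduce $f\star(g\star h)$ via axiom \eqref{E:syst1}, your correspondence $(k,w)\mapsto(a,b)$ is indeed a bijection from the non-nested index pairs onto ordered pairs of distinct slots of $f$, and applying \eqref{E:syst2} (in its original form when $a<b$, with $g$ and $h$ interchanged when $a>b$, the sign $(-1)^{jl}$ being symmetric in $j$ and $l$) matches the non-nested sum for $(f,g,h)$ with $\epsilon_{g,h}$ times the non-nested sum for $(f,h,g)$, which by the same nested/non-nested split equals $\epsilon_{g,h}\bigl[(f\star h)\star g-f\star(h\star g)\bigr]$.

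One point of bookkeeping you should be aware of: the identity you derive is the associator symmetry
\[
(f\star g)\star h-f\star(g\star h)=\epsilon_{g,h}\,(f\star h)\star g-\epsilon_{g,h}\,f\star(h\star g),
\]
whereas relation \eqref{E:preLie} as printed in the paper has the two right-hand terms with the opposite signs, i.e.\ it reads as associator \emph{anti}symmetry. The printed version is a typo: it is inconsistent with the paper's own proof of Proposition \ref{P:Jacobi} (where each group of four terms vanishes only under the symmetric form) and with the use of \eqref{E:preLie} inside the proof of Lemma \ref{L:preLie_formula}. So your final identity is the relation the paper actually uses throughout, and your appeal to Proposition \ref{P:Jacobi} for the Lie bracket is then legitimate; just note explicitly that you are proving the (correct) symmetric form rather than \eqref{E:preLie} as literally displayed.
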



Using Proposition \ref{P:graded2weight}, one gets the following Corollary.

\begin{cor}\label{C:signcirc} Any graded pre-Lie system $(\O,\circ)$ gives rise to a (weight) graded pre-Lie algebra
$$(\O_{wL})_p=\bigoplus\limits_{i,n|i+n-1=p} \O^n_i$$ with the pre-Lie product given by:  $\forall f\in \O^n_i, g\in \O^m_j$, 
$$ f\circ g=(-1)^{|g|(n-1)}\sum_{k=1}^n (-1)^{(m-1)(k-1)} f\circ_k g,$$
with $|g|=m+j-1$.
The associated (weight) graded Lie structure is denoted by 
$$[f,g]=f\circ g-(-1)^{|f||g|} g\circ f.$$
\end{cor}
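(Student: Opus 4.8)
The plan is to invoke Proposition \ref{P:graded2weight} to pass from the given graded pre-Lie system $(\O,\circ)$ to its associated weight graded pre-Lie system, and then feed this into Proposition \ref{P:unsignedcirc} applied in the weight grading. Concretely, Proposition \ref{P:graded2weight} produces a weight graded pre-Lie system $\star_k^{w}$ via the sign twist $f\star_k^w g=(-1)^{(j+m-1)(n-1)+(m-1)(k-1)}f\circ_k g$. This is exactly the sign appearing in the stated formula once we sum over $k$: writing $|g|=m+j-1$, the factor $(-1)^{(j+m-1)(n-1)}=(-1)^{|g|(n-1)}$ pulls out of the sum and the factor $(-1)^{(m-1)(k-1)}$ remains inside, so $f\circ g=\sum_{k=1}^n f\star_k^w g$ matches $(-1)^{|g|(n-1)}\sum_{k=1}^n(-1)^{(m-1)(k-1)}f\circ_k g$. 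Thus the product defined in the corollary is precisely $f\circ g=\sum_k f\star_k^w g$, the total composition of the weight graded pre-Lie system.

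First I would record that the weight grading is the correct grading to work in: a composition $\O^n_i\otimes\O^m_j\to\O^{n+m-1}_{i+j}$ sends weight $p=i+n-1$ and $q=j+m-1$ to weight $(i+j)+(n+m-1)-1=p+q$, so the product is homogeneous of degree $0$ for the weight grading, and $(\O_{wL})_p=\bigoplus_{i+n-1=p}\O^n_i$ is well defined as a $\Z$-graded $R$-module. Then I would verify that a weight graded pre-Lie system, under the total-composition product $f\mapsto\sum_k f\star_k^w g$, satisfies the pre-Lie algebra identity \eqref{E:preLie} with the Koszul sign $\epsilon_{b,c}$ now computed using the weight grading. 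This is the heart of the matter and is the weight-graded analogue of Proposition \ref{P:unsignedcirc}: one expands $(f\circ g)\circ h-f\circ(g\circ h)$ using relations \eqref{E:syst1} and \eqref{E:syst2}, and checks that the ``associator'' is symmetric in $g,h$ up to the sign $(-1)^{|g||h|}=\epsilon^w_{g,h}$, the terms where the insertion places of $g$ and $h$ are nested cancelling against $f\circ(g\circ h)$ and the terms where they are disjoint giving the symmetric contribution via \eqref{E:syst2}. Finally Proposition \ref{P:Jacobi} applies verbatim to yield the graded Lie bracket $[f,g]=f\circ g-(-1)^{|f||g|}g\circ f$.

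The main obstacle is bookkeeping the signs in the disjoint-insertion case: one must check that the sign $(-1)^{(j+m-1)(n-1)+(m-1)(k-1)}$ combines correctly with the transposition sign $(-1)^{(j+m-1)(l+p-1)}$ from \eqref{E:syst2} so that the cross terms assemble into exactly $\epsilon^w_{g,h}\,(f\circ(h\circ g))-\epsilon^w_{g,h}\,((f\circ h)\circ g)$ with $\epsilon^w_{g,h}=(-1)^{|g||h|}$. Rather than redo this computation, the cleanest route is to observe that Proposition \ref{P:unsignedcirc} already proves the pre-Lie identity for \emph{any} graded pre-Lie system in its own degree grading; since Proposition \ref{P:graded2weight} asserts that $(\O,\star^w)$ is again a bona fide (weight) graded pre-Lie system, applying Proposition \ref{P:unsignedcirc} to $(\O,\star^w)$ with the weight playing the role of the degree gives the pre-Lie algebra $(\O_{wL},\circ)$ for free, and the displayed sign in the statement is nothing but the unwinding of the definition of $\star^w$. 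This reduces the corollary to a sign verification that the formula for $\circ$ agrees with $\sum_k\star^w_k$, which is the short computation indicated above.
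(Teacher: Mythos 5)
Your proposal is correct and follows essentially the same route as the paper: the paper's proof of this corollary is precisely the combination of Proposition \ref{P:graded2weight} (the sign twist producing a weight graded pre-Lie system) with Gerstenhaber's Proposition \ref{P:unsignedcirc} applied with the weight playing the role of the degree, which is exactly your argument. Your explicit checks---that weights are additive under composition and that the displayed sign is the unwinding of the twist $(-1)^{(j+m-1)(n-1)+(m-1)(k-1)}$ summed over $k$---merely spell out the ``short computation'' the paper leaves implicit.
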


Next Lemma is a technical lemma that will be useful in the sequel. We will see in the proof, that this lemma is independent of the ring $R$ that we consider and there is no assumption concerning the $2$-torsion of the $R$-modules considered. This is a new fact that can be of independent interest.

\begin{lem}\label{L:preLie_formula} Let $(\cal O,\circ)$ be a graded pre-Lie system.
\begin{itemize}
\item Let $g\in \O$ be an odd degree element. Then $\forall f \in \O$ one has
\begin{equation*}
(f\star g) \star g=f\star (g\star g) \text{ and }
\end{equation*}
\begin{equation*}
 \{f,g\star g\}=-\{g,\{g,f\}\}=-\{g\star g,f\}.
\end{equation*}
\item Let $g\in\O$ be an  element of odd weight, {\it i.e.} $|g|$ is odd. Then $\forall f \in \O$, one has
\begin{equation}\label{E:J1}
(f\circ g) \circ g=f\circ (g\circ g) \text{ and }
\end{equation}
\begin{equation}\label{E:J2}
 [f,g\circ g]=-[g,[g,f]]=-[g\circ g,f].
\end{equation}
\end{itemize}
\end{lem}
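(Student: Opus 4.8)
The plan is to prove the first (``right-associativity'') identity directly from the axioms of the pre-Lie \emph{system}, and then to deduce the bracket identities formally. Over a ring in which $2$ is invertible both identities are classical: for instance the Jacobi identity of Proposition \ref{P:Jacobi} applied with $a=f$, $b=c=g$ gives $\{f,\{g,g\}\}+2\{g,\{g,f\}\}=0$, and since $\{g,g\}=2\,g\star g$ this only yields $2\big(\{f,g\star g\}+\{g,\{g,f\}\}\big)=0$; likewise the equality $(f\star g)\star g=f\star(g\star g)$ asserts the vanishing of the associator $(f\star g)\star g-f\star(g\star g)$, which purely pre-Lie-algebraic means control only up to $2$-torsion. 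The new point is to establish the first identity \emph{exactly}, by an explicit cancellation at the level of the insertion maps $\circ_k$, after which the bracket identities follow on the nose.

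First I would expand $(f\star g)\star g$, for $f\in\mathcal O^n$ and $g\in\mathcal O^m$ of odd degree $j$, as the double sum $\sum_{k=1}^n\sum_{w=1}^{n+m-1}(f\circ_k g)\circ_w g$, and split it according to the position $w$ relative to the copy of $g$ inserted at place $k$. The \emph{nested} terms, where $w\in\{k,\dots,k+m-1\}$, say $w=k+v-1$ with $1\le v\le m$, are rewritten by axiom (\ref{E:syst1}) as $(f\circ_k g)\circ_{k+v-1}g=f\circ_k(g\circ_v g)$; summing over $k$ and $v$ these reassemble exactly into $f\star(g\star g)$. It then remains to see that the \emph{parallel} terms, where the second $g$ lands in a slot of $f$ other than $k$, sum to zero. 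Inserting $g$ of arity $m$ at place $k$ shifts the slots of $f$ lying to the right of $k$ by $m-1$, so a pair of distinct original slots $k<l$ of $f$ contributes $(f\circ_k g)\circ_{l+m-1}g$ and $(f\circ_l g)\circ_k g$; by axiom (\ref{E:syst2}) with $u=k<v=l$ and $h=g$ the first equals $(-1)^{j^2}(f\circ_l g)\circ_k g=-(f\circ_l g)\circ_k g$, so the two cancel. Grouping the parallel terms into such pairs $\{k,l\}$ gives $0$, whence $(f\star g)\star g=f\star(g\star g)$. The cancellation is term-by-term, so no hypothesis on the $2$-torsion of $R$ is used.

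For the bracket identities I would argue formally. Since $g\star g$ has even degree $2j$, antisymmetry of $\{\,,\,\}$ gives $\{f,g\star g\}=-\{g\star g,f\}$ at once. To get $\{f,g\star g\}=-\{g,\{g,f\}\}$ I would expand $\{g,\{g,f\}\}$ by the definition of the bracket (using $(-1)^{j(i+j)}=-(-1)^{ij}$, as $j$ is odd), which produces four $\star$-monomials; together with the two monomials of $\{f,g\star g\}$ they regroup into the three associators $-\,\mathrm{as}(f,g,g)-\mathrm{as}(g,g,f)+(-1)^{ij}\mathrm{as}(g,f,g)$, where $\mathrm{as}(a,b,c)=(a\star b)\star c-a\star(b\star c)$. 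The first associator vanishes by the identity just proved, while the pre-Lie relation (\ref{E:preLie}) shows $\mathrm{as}(g,g,f)$ and $\mathrm{as}(g,f,g)$ differ exactly by the sign $\epsilon_{g,f}=(-1)^{ij}$; the two surviving associators then cancel, giving the claim. The second bullet (odd \emph{weight}) is proved by the identical argument: viewing $(\mathcal O,\circ)$ as a weight-graded pre-Lie system via Proposition \ref{P:graded2weight}, the weight-graded product of Corollary \ref{C:signcirc} is again a plain sum $\sum_k$ of the (twisted) insertion maps, the relevant instance of axiom (\ref{E:syst2}) now carries the sign $(-1)^{|g|^2}=-1$ precisely when $|g|$ is odd, and $g\circ g$ has even weight $2|g|$, so all the same cancellations occur.

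I expect the main obstacle to be the careful set-up of the term-pairing in the first identity: one must track the index shift by $m-1$ when the second copy of $g$ is inserted to the right of the first, verify that axiom (\ref{E:syst2}) applies with precisely the sign $(-1)^{j^2}=-1$ needed for cancellation, and confirm that each parallel term is hit exactly once by the pairing $\{k,l\}$. In the weight-graded case the additional twisting signs of Corollary \ref{C:signcirc} make this bookkeeping more delicate, though the combinatorial skeleton of the argument is unchanged.
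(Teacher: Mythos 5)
Your proof is correct and follows essentially the same route as the paper: the identity $(f\star g)\star g=f\star(g\star g)$ is established by exactly the same splitting of the double sum (nested terms reassembled via relation (\ref{E:syst1}), parallel terms cancelled in pairs via relation (\ref{E:syst2}), whose sign is $-1$ precisely because $g$ has odd degree, resp.\ odd weight), and the bracket identities are then deduced from it together with the pre-Lie relation and antisymmetry, which is also how the paper derives (\ref{E:J2}) from (\ref{E:J1}). The only cosmetic differences are that you treat the graded case explicitly and transport it to the weight-graded one while the paper does the reverse, and that you organize the second step as a cancellation of three associators rather than the paper's direct expansion of $-[g,[g,f]]$.
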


\begin{proof}The proof is the same in the two cases. Let us focus on the weight graded case. Let $i$ be the weight of $f$. Relation (\ref{E:J2}) is a consequence of Relation (\ref{E:J1}), for
\begin{multline*}
-[g,[g,f]]=-g\circ (g\circ f)+(-1)^i g\circ (f\circ g)+(-1)^{i+1} (g\circ f)\circ g-(-1)^{i+1+i}(f\circ g)\circ g=\\
(-1)^i(\underbrace{  g\circ (f\circ g)- (g\circ f)\circ g-(-1)^i g\circ (g\circ f)+(-1)^i (g\circ g)\circ f}_{=0 \text { by (\ref{E:preLie})}})
-(g\circ g)\circ f+\underbrace{(f\circ g)\circ g}_{=f\circ (g\circ g)}=[f,g\circ g].
\end{multline*}
Note that Relation (\ref{E:J1}) is a consequence of the pre-Lie relation in case every $R$-module $\cal O_i^n$ has no 2-torsion, for if $g$ has odd weight, then $2(f\circ g) \circ g-2f\circ (g\circ g)=0$.
This is still true without this assumption, if one looks closely at the definition of the (weight) graded pre-Lie structure of Corollary \ref{C:signcirc}. 
The weight graded pre-Lie system relation (\ref{E:syst1}) gives
$$(f\circ g)\circ g =f\circ (g\circ g)+\sum_{u=1}^n\sum_{v=1}^{u-1} (f\circ_u g)\circ_v g+
\sum_{u=1}^n\sum_{v=u+m}^{n+m-1}(f\circ_u g)\circ_v g.$$
The weight graded pre-Lie system relation (\ref{E:syst2}) implies that
$$\sum_{u=1}^n\sum_{v=u+m}^{n+m-1}(f\circ_u g)\circ_v g=\sum_{u=1}^n\sum_{k=u+1}^n(f\circ_u g)\circ_{k+m-1} g=-\sum_{k=1}^n\sum_{u=1}^{k-1} (f\circ_k g)\circ_u g,$$
which ends the proof.
\end{proof}

\subsection{Application to $\End(V)$}

In the sequel we will be concerned with the $(\Z,\N)$-bigraded $R$-module $\End^n_i(V)=\End_i(V^{\otimes n},V)$ where $V$ is
a dgmodule. Example \ref{E:fund}, Proposition \ref{P:unsignedcirc} and Corollary \ref{C:signcirc} assemble in the following Proposition.

\begin{prop}\label{P:end} Let $V$ be a dgmodule. The $(\Z,\N)$-bigraded $R$-module $\End(V)$ forms a graded pre-Lie system.
Consequently, $\forall f\in\End^n(V)$, the product
$$f\star g=\sum_{k=1}^{n} f(\id^{k-1}\otimes g\otimes \id^{n-k})$$
endows $\End(V)$ with a structure of graded pre-Lie algebra and the product
$$f\circ g=(-1)^{|g|(n-1)}\sum_{k=1}^n (-1)^{(m-1)(k-1)} f(\id^{k-1}\otimes g\otimes \id^{n-k})$$ endows $\End(V)$ 
with a structure of  (weight) graded pre-Lie algebra.
\end{prop}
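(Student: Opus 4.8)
The plan is to observe that this Proposition merely assembles three earlier results, so the only statement requiring genuine verification is that the insertion maps $\circ_k$ make $\End(V)$ into a graded pre-Lie system in the sense of Definition \ref{D:preLiesystem}; the two displayed products then follow by quoting Proposition \ref{P:unsignedcirc} and Corollary \ref{C:signcirc} with $f\circ_k g=f(\id^{\otimes k-1}\otimes g\otimes \id^{\otimes n-k})$ substituted into their formulas. Although Example \ref{E:fund} already records that an operad is a pre-Lie system, I would check the two axioms directly from the definition of the insertion map, since the signs are the whole point.

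First I would verify relation (\ref{E:syst1}). For $f\in\End^n$, $g\in\End^m$, $h\in\End^p$ and $1\leq u\leq n$, $1\leq v\leq m$, both sides are the map obtained by plugging $h$ into the $v$-th input of $g$ and the result into the $u$-th input of $f$. On the right, inserting $g$ at slot $u$ of $f$ shifts $g$'s own inputs to the global positions $u,\dots,u+m-1$, so the $v$-th input of $g$ sits at global position $v+u-1$; this is exactly the index appearing in $\circ_{v+u-1}$. Since $g$ is applied before $h$ on both sides, no reordering of $g$ past $h$ occurs and the Koszul rule contributes nothing: the identity reduces to associativity of composition of maps.

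The substantive step is relation (\ref{E:syst2}), and I expect the sign to be the main obstacle. Fix $1\leq u<v\leq n$, with $g\in\End^m_j$ and $h\in\End^p_l$. On the left, $(f\circ_u g)\circ_{v+m-1} h$ inserts $g$ into slot $u$ (shifting the later slots of $f$ by $m-1$, whence the index $v+m-1$) and then $h$ into the shifted slot; on the right, $(f\circ_v h)\circ_u g$ inserts $h$ first and $g$ second. Because $u<v$, the two operators act on disjoint blocks of the input tensor, so the underlying unsigned maps agree. To compare the two signed composites I would write each as $f$ precomposed with a two-stage tensor map and apply the interchange rule $(a\otimes b)\circ(c\otimes d)=\epsilon_{b,c}\,(a\circ c)\otimes(b\circ d)$: swapping which of $g$ and $h$ is performed in the first stage forces $g$ (degree $j$) and $h$ (degree $l$) past one another exactly once, producing the sign $\epsilon_{g,h}=(-1)^{jl}$ demanded by (\ref{E:syst2}). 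Tracking this single interchange sign, together with the bookkeeping of the slot displacement $v\mapsto v+m-1$, is where all the care is needed.

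With both axioms established, $(\End(V),\circ_k)$ is a graded pre-Lie system, and the remaining two assertions are immediate. Proposition \ref{P:unsignedcirc} gives the graded pre-Lie algebra $\End(V)=\oplus_n\End^n$ with product $f\star g=\sum_{k=1}^n f\circ_k g=\sum_{k=1}^n f(\id^{\otimes k-1}\otimes g\otimes \id^{\otimes n-k})$, while Corollary \ref{C:signcirc} gives the weight graded pre-Lie algebra whose product is obtained by inserting the same insertion maps into its signed formula. No further computation is required.
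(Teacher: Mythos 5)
Your proposal is correct and follows essentially the same route as the paper: the paper gives no separate proof of Proposition \ref{P:end}, simply assembling Example \ref{E:fund} (the endomorphism operad is an operad, hence a graded pre-Lie system), Proposition \ref{P:unsignedcirc} and Corollary \ref{C:signcirc}, and your direct Koszul-sign verification of the two axioms merely fills in what Example \ref{E:fund} takes for granted. One small slip in citation: the relations you verify, with sign $(-1)^{jl}$, are the (unnumbered) axioms of the \emph{graded} pre-Lie system of Definition \ref{D:preLiesystem} --- which is indeed what must be checked here --- not equations (\ref{E:syst1})--(\ref{E:syst2}), which belong to the \emph{weight} graded variant and carry the sign $(-1)^{(j+m-1)(l+p-1)}$.
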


\begin{rem} The signs obtained in the equivalence between graded pre-Lie systems and weight graded pre-Lie systems in Proposition \ref{P:graded2weight} come from a bijection between $\End(V)$ and $\End(sV)$. Let us consider the isomorphism
$\Theta$ of Getzler and Jones in \cite{GetJon90}
\begin{equation}\label{E:Theta}
 \begin{array}{cccc}
  \Theta: & \Hom_i((sV)^{\otimes n},sV)&\rto& \Hom_{i+n-1}((V)^{\otimes n}, V) \\
&F & \mapsto & \Theta(F)
 \end{array}
\end{equation}
defined by
$$s\Theta(F)(s^{-1})^{\otimes n}=F.$$
For $F\in \End^n_i(sV)$ and $G\in \End^m_j(sV)$ one has,
\begin{align*}
s\Theta(F\circ_k G)(s^{-1})^{\otimes n+m-1}&=&F(\id^{\otimes k-1}\otimes G\otimes \id^{n-k})=
s\Theta(F)(s^{-1})^{\otimes n}(\id^{\otimes k-1}\otimes s\Theta(G)(s^{-1})^{\otimes m}\otimes \id^{\otimes n-k})\\
&=&(-1)^{j(n-k)}s\Theta(F)((s^{-1})^{\otimes k-1}\otimes \Theta(G)(s^{-1})^{\otimes m}\otimes (s^{-1})^{\otimes n-k})\\
&=&(-1)^{j(n-k)}(-1)^{(k-1)(j+m-1)}  s\Theta(F)(\id^{\otimes k-1}\otimes \Theta(G)\otimes \id^{n-k})(s^{-1})^{\otimes n+m-1},
\end{align*}
hence $(-1)^{(m-1)(k-1)+|\Theta(G)|(n-1)}\Theta(F)\circ_k\Theta(G)=\Theta(F\circ_k G)$. Consequently,
\begin{equation}\label{E:comparecirc}
\Theta(F)\circ \Theta(G)=\Theta(F\star G).
\end{equation}
\end{rem}

In particular, Lemma \ref{L:preLie_formula} applies for the graded pre-Lie system $\End(V)$. Next proposition states that both pre-Lie products behave well with respect to the differential of the dgmodule $V$.

\begin{prop}\label{P:partialcirc} Let $V$ be a dgmodule with differential $m_1$. The induced differential $\partial$ 
on $\End(V)$ satisfies, $\forall f\in \End^n_i(V), \; \forall g\in \End(V),$

\begin{align*}
 \partial f&=\{m_1, f\}, &\partial f&= [m_1,f];  \\
\partial(f\star g)&=\partial f\star g+(-1)^{i} f\star \partial g & \partial(f\circ g)&=\partial f\circ g+(-1)^{|f|} f\circ \partial g;  \\
\partial\{f,g\}&=\{\partial f,g\}+(-1)^{i} \{f,\partial g\}, &\partial[f,g]&=[\partial f,g]+(-1)^{|f|} [f,\partial g].
\end{align*}
As a consequence $\End(V)$ is a differential  graded Lie algebra and a differential (weight) graded Lie algebra. 
\end{prop}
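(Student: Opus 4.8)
The plan is to establish the six identities in order and then deduce the two differential graded Lie algebra structures as formal consequences. The fundamental observation is that the differential $\partial$ on $\End(V)$ is, up to sign conventions, the operation of bracketing with $m_1 \in \End^2_{-1}(V)$ (note $m_1$ has arity $1$ in the sense that $m_1 \in \End^1_{-1}(V)$, so that its weight is $|m_1| = 1 + 1 - 1 = 1$, which is odd, and its degree $i = -1$ is also odd). This parity of $m_1$ is exactly what will make Lemma \ref{L:preLie_formula} applicable later, and more immediately it is what makes the bracketing a differential, i.e.\ squares to zero. So the first step is to prove $\partial f = \{m_1, f\}$ and $\partial f = [m_1, f]$ by unwinding both sides using the explicit formula for the insertion maps $\circ_k$ from Example \ref{E:fund} and the definition of $\partial$ on $\Hom(V^{\otimes n}, V)$ given in the Notation section.

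First I would verify the first line, $\partial f = \{m_1, f\} = [m_1, f]$. Writing $f \in \End^n_i(V)$, the Hochschild-style differential $\partial f$ is the alternating sum of pre- and post-compositions with $m_1$: the term $m_1 \circ_1 f$ (post-composition, the differential $\partial_V$ applied to the output) together with the terms $f \circ_k m_1$ for $1 \le k \le n$ (the differential applied to each of the $n$ input slots), assembled with the Koszul signs dictated by the differential formula $(\partial f)(c) = \partial_V(f(c)) - (-1)^i f(\partial_V c)$ on $V^{\otimes n}$. The task here is a bookkeeping check that the Koszul signs coming from distributing $\partial_V$ across a tensor product, namely the $(-1)^{\,\text{(preceding degrees)}}$ factors, match exactly the signs built into the definitions $\{m_1, f\} = m_1 \star f - (-1)^{\langle\cdot\rangle} f \star m_1$ and $[m_1, f] = m_1 \circ f - (-1)^{|m_1||f|} f \circ m_1$ from Proposition \ref{P:unsignedcirc} and Corollary \ref{C:signcirc}. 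Since $m_1$ has both odd degree and odd weight, the two bracket conventions produce the same operator, which is consistent with both equalities holding simultaneously.

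Next I would establish the two Leibniz rules for $\star$ and $\circ$. For the unsigned product $\star$, the identity $\partial(f \star g) = \partial f \star g + (-1)^i f \star \partial g$ follows by writing $\partial(f \star g) = \{m_1, f \star g\}$ and using the fact that bracketing with a fixed element is a derivation of the pre-Lie product $\star$ — this is itself a direct consequence of the pre-Lie relation \eqref{E:preLie} applied with the distinguished element $m_1$. Equivalently, and perhaps more cleanly, one expands $f \star g = \sum_k f \circ_k g$ and applies the associativity-type relation \eqref{E:syst1} together with the insertion formula, tracking the sign that arises when $\partial_V$ passes the factor $g$. The same argument, run through the weight-graded sign conventions of Corollary \ref{C:signcirc}, gives the Leibniz rule for $\circ$ with the sign $(-1)^{|f|}$ replacing $(-1)^i$. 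The two Leibniz rules for the brackets $\{\cdot,\cdot\}$ and $[\cdot,\cdot\}$ on the third line then follow mechanically by writing each bracket as a difference of two products and applying the product Leibniz rule to each term, the antisymmetry of the bracket absorbing the cross-terms.

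The main obstacle is sign management rather than any conceptual difficulty: every identity is proved by expanding both sides via the insertion formula and the Koszul rule, and the real content is confirming that the signs in the definitions of $\star$, $\circ$, $\{\cdot,\cdot\}$, $[\cdot,\cdot]$ were chosen precisely so that these Leibniz rules hold on the nose. Once all six displayed identities are in hand, the final assertion is immediate. Indeed, the first line shows $\partial = \{m_1, -\} = [m_1, -\}$, and since $m_1 \star m_1 = m_1 \circ_1 m_1$ with $m_1$ of odd weight, Lemma \ref{L:preLie_formula} (or directly $\partial \circ \partial f = \{m_1, \{m_1, f\}\} = \tfrac{1}{2}\{\{m_1,m_1\},f\}$ using the Jacobi identity of Proposition \ref{P:Jacobi}) gives $\partial^2 = 0$ because $\{m_1, m_1\} = 2 m_1 \star m_1$ pairs with the graded Jacobi relation to force $\{m_1,\{m_1,-\}\} = 0$; one must here invoke the torsion-free-independent form of the computation, which is exactly what Lemma \ref{L:preLie_formula} supplies. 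The third line of identities is precisely the statement that $\partial$ is a derivation of each Lie bracket, so $\End(V)$ is a differential graded Lie algebra for $\{\cdot,\cdot\}$ and a differential (weight) graded Lie algebra for $[\cdot,\cdot]$, completing the proof.
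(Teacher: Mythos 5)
Your overall route is the same as the paper's: identify $\partial$ with $\{m_1,-\}$ and $[m_1,-]$, deduce the two Leibniz rules from the pre-Lie identities, and get the bracket rules on the third line mechanically. The gap is in the key step. You justify the Leibniz rule by asserting that ``bracketing with a fixed element is a derivation of the pre-Lie product,'' and that this is ``a direct consequence of the pre-Lie relation (\ref{E:preLie}).'' That assertion is false in a general graded pre-Lie algebra: the adjoint action $\{a,-\}$ is a derivation of the Lie \emph{bracket} (that is the Jacobi identity, Proposition \ref{P:Jacobi}), but it is not a derivation of the pre-Lie \emph{product}. Concretely, expanding $\{m_1,f\star g\}-\{m_1,f\}\star g-(-1)^{i}f\star\{m_1,g\}$ produces six terms. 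The pre-Lie relation only compares the associator $(f\star m_1)\star g-f\star(m_1\star g)$ with the associator $(f\star g)\star m_1-f\star(g\star m_1)$ --- it (anti)symmetrizes the associator in its \emph{last two} arguments --- and so it cancels four of the six terms; but the two remaining terms,
\begin{equation*}
m_1\star(f\star g)-(m_1\star f)\star g ,
\end{equation*}
form a \emph{left} associator that no instance of (\ref{E:preLie}) can reach. This left associator is exactly the defect by which $\{a,-\}$ fails to be a derivation of the product for a general element $a$. What makes it vanish here is special to $m_1$: it has arity $1$, so relation (\ref{E:syst1}) of the pre-Lie \emph{system} gives $m_1\circ_1(f\circ_k g)=(m_1\circ_1 f)\circ_k g$ on the nose. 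The paper's proof uses both ingredients, in that order: (\ref{E:preLie}) to kill four terms, then (\ref{E:syst1}) to kill the last two. Your parenthetical alternative (``equivalently, \ldots applies the associativity-type relation (\ref{E:syst1})'') does name the missing ingredient, but you present it as a mere reformulation of the first argument rather than as its indispensable second half; as written, the proof of the second line (and hence of the third line and of the final dg-Lie assertions, which depend on it) does not go through.

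Two smaller points. The weight of $m_1\in\End^1_{-1}(V)$ is $-1+1-1=-1$, not $+1$ as you compute (only the parity matters for applying Lemma \ref{L:preLie_formula}, so no harm results, but it indicates the signs were not actually being tracked). And in your discussion of $\partial^2=0$, the identity $\{m_1,\{m_1,f\}\}=\tfrac{1}{2}\{\{m_1,m_1\},f\}$ divides by $2$, which is precisely the kind of step this paper is structured to avoid; you do repair this by invoking Lemma \ref{L:preLie_formula}, and more directly one can note that $m_1\circ m_1=m_1\circ_1 m_1=0$ and apply (\ref{E:J2}), or simply recall that $\partial$ is the differential of the Hom-complex, so its square is zero by definition --- the paper does not reprove this at all.
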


\begin{proof} The differential $m_1$ is considered as an element of $\End_{-1}^1(V)$, hence of degree $-1$ and of weight $-1$. Recall that $\forall f\in\End^n_i(V),$ one has
$$
\partial f=m_1\circ_1 f-(-1)^i\sum_{k=1}^m f\circ_k m_1= \{m_1,f\}
=(m_1\circ f-(-1)^{i+n-1}f\circ m_1)=[m_1,f].
$$
The proof will be the same for the pre-Lie product $\star$ and the pre-Lie product $\circ$. Let us prove it for $\circ$.
Using the pre-Lie relation (\ref{E:preLie}), one gets
\begin{multline*}
 \partial(f\circ g)-\partial f\circ g-(-1)^{|f|} f\circ \partial g=
m_1\circ (f\circ g)-(-1)^{|f|+|g|}(f\circ g)\circ m_1  
-(m_1\circ f)\circ g+\\
(-1)^{|f|}(f\circ m_1)\circ g -(-1)^{|f|}f\circ (m_1\circ g)+(-1)^{|f|+|g|}f\circ (g\circ m_1)= 
m_1\circ (f\circ g)-(m_1\circ f)\circ g.
\end{multline*}
The last term of the equalities vanish because of Relation (\ref{E:syst1}). The relation
$\partial[f,g]=[\partial f,g]+(-1)^{|f|} [f,\partial g]$ is immediate.
\end{proof}

\begin{rem}\label{R:op} Assume we are given an operad $\mathcal P$ in graded $R$-modules, that is,
a collection $(\mathcal P(n))_{n\geq 1}$ where $\mathcal P(n)$ is a graded $R$-module 
$\mathcal P(n)=\oplus_{i\in\Z} \mathcal P^n_i$. The axioms for the operad are exactly the ones of Proposition \ref{P:unsignedcirc}, where $\circ_k$ denotes the partial composition. As a consequence
$\star$ determines a graded pre-Lie structure on $\mathcal P$ and $\circ$ a weight graded pre-Lie structure on $\mathcal P$ where $f\in\mathcal P^n_i$ has weight $i+n-1$. Lemma \ref{L:preLie_formula} applies also in this case. The same is true for the convolution operad $\Hom(\cal C,\cal P)$ as noticed in the introduction of the section. In particular, the convolution operad forms a graded pre-Lie system.
 \end{rem}

\section{Homology of graded R-modules of homomorphisms} \label{sec:homology}

In this section, we give the conditions on the complexes $C$ and $D$ so that the map
$$H(\Hom(C,D))\rto \Hom(H(C),H(D))$$ is an isomorphism (Proposition \ref{P:lift}) and that the map
$$H(\Hom(C^{\otimes n}, C))\rto \Hom(H(C)^{\otimes n},H(C))$$ is an isomorphism (Corollary \ref{C:lift}).
The last result is one of the key ingredient in order to prove the obstruction Theorem \ref{T:main}. It might be also of independent interest.

\begin{defn}
Let $C$ and $D$ be dgmodules. We denote by $\Hom(C,D)$ the dgmodule
$$\Hom_i(C,D)=\prod_n \Hom_R(V_n,W_{n+i})$$
with differential
$\partial: \Hom_i(C,D)\rightarrow\Hom_{i-1} (C,D)$
defined for $c\in C_n$ by
$$(\partial f)_n(c)=\partial_D(f_n(c))-(-1)^i f_{n-1}(\partial_C c).$$

The graded $R$-module of cycles in $C$ is $Z_i(C)=\Ker(\delta_C: C_i\rto C_{i-1})$  and
$B_i(C)=\im(\delta_C: C_{i+1}\rto C_{i})$ is the graded $R$-module of boundaries in $C$. The homology of $C$ is the graded $R$-module $H_i(C)=Z_i(C)/B_i(C)$.

One has $\partial f=0$ if and only if $f$ is a morphism of differential graded $R$-modules.
In particular $f(Z(C))\subset f(Z(D))$ and $f(B(C))\subset B(D)$. As a consequence, if 
$f\in\Hom_i(C,D)$ and $\partial f=0$, then $f$ defines a map $\bar f\in \Hom_i(H(C),H(D))$ as $\bar f([c])=[f(c)]$. Moreover, if $f=\partial u$, then $f(Z(C))\subset B(D)$ and $\bar f=0$. Thus
one has a well defined map
$${\mathcal H}_{C,D}: H(\Hom(C,D))\rightarrow \Hom(H(C),H(D)).$$

\end{defn}
\begin{defn}
We say that a dgmodule $C$  satisfies {\sl assumption (A)}, if
the sequences  $0\rto Z(C)\rto C \xrightarrow{\partial_C} B(C)\rto 0$ and
 $0\rto B(C)\rto Z(C)\rto H(C)\rto 0$ are split exact.

\end{defn}

\begin{prop}\label{P:lift} Let $C$ and $D$ be dgmodules satisfying assumption (A).

\begin{itemize}
 \item[a)] Given $g\in \Hom_i(H(C),H(D))$, there exists $f\in \Hom_i(C,D)$ such that $\partial f=0$ and $\bar f=g$.
\item[b)] For $f\in \Hom_i(C,D)$ satisfying $\partial f=0$ and $\bar f=0\in \Hom_i(H(C),H(D))$, there exists
$u\in \Hom_{i+1}(C,D)$ such that $\partial u=f$.
\end{itemize}
Consequently the map ${\mathcal H}_{C,D}: H(\Hom(C,D))\rto \Hom(H(C),H(D))$ is an isomorphism of graded $R$-modules and the dgmodule $\Hom(C,D)$ satisfies assumption (A).
\end{prop}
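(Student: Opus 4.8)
The plan is to exploit assumption (A) directly, since the two split short exact sequences are exactly what allows us to both lift homology maps to chain maps and to build explicit nullhomotopies. First I would fix, once and for all, splittings of the two sequences for both $C$ and $D$: write $C_i = Z_i(C)\oplus W_i$ where $\partial_C$ restricts to an isomorphism $W_i \xrightarrow{\sim} B_{i-1}(C)$, and $Z_i(C) = B_i(C)\oplus L_i$ where $L_i \cong H_i(C)$ via the projection. This gives a direct-sum decomposition $C_i \cong B_i(C)\oplus L_i\oplus W_i$ in which the differential sends $W_i$ isomorphically onto $B_{i-1}(C)$ and kills $B_i(C)\oplus L_i$; the analogous decomposition holds for $D$. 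The point of assumption (A) is precisely that these summands exist as $R$-module direct summands with no projectivity or torsion hypotheses needed.

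For part (a), given $g\in \Hom_i(H(C),H(D))$, I would transport $g$ through the chosen isomorphisms $L(C)\cong H(C)$ and $L(D)\cong H(D)$ to get a map $g'\colon L(C)\to L(D)$ of degree $i$, and then extend it to $f\colon C\to D$ by declaring $f$ to be $g'$ on the $L$-summand and zero on the $B$- and $W$-summands. One checks $\partial f = 0$: on $B(C)\oplus L(C)$ the source is a cycle and $f$ lands in $Z(D)$, so $\partial_D f = 0$ there, while the term $f\partial_C$ vanishes because $f$ kills $B(C)$; on $W(C)$, $f$ is zero but $\partial_C$ maps into $B(C)$ where $f$ is again zero. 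Since $f$ sends $L(C)$ to $L(D)$ representing the right classes and kills $B(C)$, the induced map $\bar f$ equals $g$. For part (b), given a cycle $f$ with $\bar f = 0$, I would construct $u$ by using the inverse of the isomorphism $W\xrightarrow{\sim} B$ on $D$: since $\bar f = 0$, $f$ maps $Z(C)$ into $B(D)$, so on the $W(C)$ summand I can set $u$ to be (the $W$-lift in $D$ of) a primitive and arrange $\partial u = f$ summand by summand. The verification that $\partial u = f$ is a bookkeeping check using $\partial_D u + u\partial_C = f$ on each of the three summands.

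The final two assertions follow formally. The map ${\mathcal H}_{C,D}$ is surjective by (a) and injective by (b) — injectivity because a cycle $f$ with $\bar f=0$ is a boundary, which is exactly the statement that $f$ represents $0$ in $H(\Hom(C,D))$ — so it is an isomorphism of graded $R$-modules. That $\Hom(C,D)$ itself satisfies assumption (A) I would deduce by observing that the decomposition of $C$ and $D$ into $B\oplus L\oplus W$ summands induces a corresponding decomposition of $\Hom(C,D)$, with respect to which the differential on $\Hom(C,D)$ again has the ``shift'' shape; concretely, the chosen splittings of $C$ and $D$ let one write down explicit $R$-linear splittings of the two sequences $0\to Z(\Hom)\to \Hom \to B(\Hom)\to 0$ and $0\to B(\Hom)\to Z(\Hom)\to H(\Hom)\to 0$.

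The main obstacle I expect is part (b): keeping the signs straight in the identity $(\partial u)_n(c) = \partial_D(u_n(c)) - (-1)^{i+1} u_{n-1}(\partial_C c)$ while simultaneously juggling the three-way splitting of both $C$ and $D$. The lifting in (a) is essentially tautological once the splittings are in place, but producing the homotopy $u$ in (b) requires choosing the primitive coherently across summands and verifying that the cross-terms (the $u\partial_C$ contribution on $W(C)$ versus the $\partial_D u$ contribution) cancel correctly. I would isolate this as the one genuine computation and do it componentwise on $B(C)\oplus L(C)\oplus W(C)$, since on $B(C)\oplus L(C)=Z(C)$ the term $u\partial_C$ drops out and only $\partial_D u$ survives, whereas on $W(C)$ both terms contribute.
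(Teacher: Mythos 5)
Your splitting $C\cong B(C)\oplus L(C)\oplus W(C)$ is exactly the paper's decomposition ($L=\sigma(H(C))$, $W=\tau_C(B(C))$), and your part (a), as well as the derivation of the isomorphism and of assumption (A) for $\Hom(C,D)$ from the explicit splittings, match the paper's proof. The gap is in part (b) --- precisely the step you flag as ``the one genuine computation'' but do not carry out --- and the recipe you sketch for it would fail. You propose to build $u$ entirely from the splitting on the \emph{target} side: $u=\tau_D\circ f$ on $Z(C)$ (legitimate, since $f(Z(C))\subset B(D)$), and then to choose $u$ on $W(C)$ so that $\partial u=f$ holds there. But with $u_{n-1}(z)=\tau_D f_{n-1}(z)$ on $B(C)$, the requirement on $W(C)$ reads $\partial_D\bigl(u_n(\tau_C z)\bigr)=f_n(\tau_C z)+(-1)^{i+1}\tau_D f_{n-1}(z)$, and the right-hand side is a cycle in $D$ whose component in $\sigma(H(D))$ need not vanish; it is therefore in general not a boundary, so no choice of $u$ on $W(C)$ can satisfy the equation. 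Concretely: take $C=(R\xrightarrow{\ \id\ }R)$ in degrees $1,0$ and $D=R$ in degree $1$ with zero differential (both satisfy (A)), and $f\in\Hom_0(C,D)$ with $f_1=\id$, $f_0=0$; then $\partial f=0$ and $\bar f=0$ since $H(C)=0$, but $B(D)=0$, so every $\tau_D$-based lift vanishes on $Z(C)$, while $u_1$ lands in $D_2=0$; hence $\partial u=0\neq f$ for any $u$ produced by your recipe, even though a correct $u$ (with $u_0(z)=f_1(w)$) exists.

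The missing idea is an asymmetry in the construction: on the summand $B(C)$ one must define $u$ through the splitting of the \emph{source}, namely $u_n(y)=(-1)^i f_{n+1}(\tau_C(y))$ for $y\in B_n(C)$, i.e.\ pull back along $\tau_C$ and then apply $f$, rather than apply $f$ and lift along $\tau_D$. The target-side lift $\tau_D\circ f$ is used only on $\sigma(H(C))$, where $\bar f=0$ guarantees $f(\sigma(h))\in B(D)$, and one sets $u=0$ on $W(C)$. With these choices the cross-term $-(-1)^{i+1}u_{n-1}(z)=f_n(\tau_C z)$ on $W(C)$ produces exactly the missing summand, and the verification $\partial u=f$ closes on all three summands, as in the paper's proof.
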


\begin{proof} The short exact sequence $0\rto Z(C)\rto C\xrightarrow{\partial_C} B(C)\rto 0$ splits.
Let $\tau_C:B_{n-1}(C)\rto C_n$ denote a splitting so that $C_n=Z_n(C)\oplus \tau_C(B_{n-1}(C))$.
The short exact sequence $0\rto B(C)\rto Z(C)\rto H(C)\rto 0$ splits.
Let $\sigma:H(C)\rto Z(C)$ denote a splitting so that $Z_n(C)=\sigma(H_n(C))\oplus B_n(C)$. 
Consequently
$$C_n=\sigma(H_n(C))\oplus B_n(C)\oplus \tau_C(B_{n-1}(C))\text{ with } \partial_C(\sigma(h)+ y+\tau_C(z))=z\in  B_{n-1}(C).$$
We use the same notation for $D$.

Part a) of the proposition is proved building $f$ as 
$$\begin{array}{cccc}
 f_n:& C_n=\sigma(H_n(C))\oplus B_{n}(C)\oplus \tau_C(B_{n-1}(C))&\rightarrow&   D_{n+i}=\sigma(H_{n+i}(D))\oplus B_{n+i}(D)\oplus 
 \tau_D(B_{n+i-1}(D)) \\
&  c=\sigma(h)+y+\tau_C(z) & \mapsto& \sigma(g_n(h)) \in  \sigma(H_{n+i}(D)).
\end{array}$$
The equality $\partial_D f_n(c)=0=(-1)^if_{n-1}(\partial_C c)$ implies that $\partial f=0$ and $\overline{f}=g$.

Let us prove part b). Since $\bar f=0$, the map $f$ satisfies $f(\sigma(H(C)))\subset B(D)$.
The map $u\in \Hom_{i+1}(C,D)$ defined by
$$u_n(\sigma(h)+y+\tau_C(z))=(-1)^i f_{n+1}(\tau_C(y))+\tau_Df_n(\sigma(h)),$$
satisfies $\partial u=f$, for
$(\partial u)_n(\sigma(h)+ y+\tau_C(z))=\partial_D\left((-1)^i f_{n+1}(\tau_C(y))+\tau_Df_n(\sigma(h))\right)-(-1)^{i+1}(-1)^if_n(\tau_C z)=
f_n(\partial_C\tau_C y)+f_n(\sigma(h))+f_n(\tau_Cz)$ and $\partial_C\tau_C y=y$.

As a consequence, the map ${\mathcal H}_{C,D}$ is an isomorphism.
The proof of part a) builds an explicit splitting of the projection $Z(\Hom(C,D))\rto
 H(\Hom(C,D))\simeq \Hom(HC,HD)$ while the proof of part b) builds an explicit splitting of the map $\partial:\Hom(C,D)\rto B(\Hom(C,D))$.
Hence, if $C$ and $D$ satisfy assumption (A), so does 
$\Hom(C,D)$.
\end{proof}

\begin{cor}\label{C:lift} Let $C$ be a dgmodule such that $Z(C)$ and $H(C)$ are projective graded $R$-modules. For every $n\geq 1$ the map
$$H(\Hom(C^{\otimes n}, C))\rto \Hom(H(C)^{\otimes n}, H(C))$$
is an isomorphism of graded $R$-modules.
\end{cor}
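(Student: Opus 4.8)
The plan is to deduce Corollary \ref{C:lift} from Proposition \ref{P:lift}, the only obstacle being to verify that the hypotheses of that Proposition can be arranged for $C^{\otimes n}$ and $C$ simultaneously, i.e. that both satisfy assumption (A). First I would observe that $R$-projectivity is exactly what forces the relevant short exact sequences to split: if $H(C)$ is a projective graded $R$-module, then the sequence $0\rto B(C)\rto Z(C)\rto H(C)\rto 0$ splits because a surjection onto a projective module always admits a section; and if $Z(C)$ is projective, then $B(C)$, being a submodule that is itself a quotient (via $\partial_C$) of $C$, is a direct summand, giving a splitting of $0\rto Z(C)\rto C\xrightarrow{\partial_C} B(C)\rto 0$. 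Thus $C$ satisfies assumption (A), and moreover $B(C)$ and $H(C)$ inherit projectivity as summands of projective modules, so $C_n=\sigma(H_n(C))\oplus B_n(C)\oplus \tau_C(B_{n-1}(C))$ is a decomposition into projective graded $R$-modules.

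The heart of the argument is then to promote these facts from $C$ to $C^{\otimes n}$. Using the decomposition above together with the Koszul-sign tensor structure, one computes $Z(C^{\otimes n})$ and $H(C^{\otimes n})$ and checks they remain projective. The clean way is induction on $n$: it suffices to handle $C\otimes C'$ when $C$ and $C'$ each satisfy assumption (A) with projective $Z$ and $H$. Here I would use the splittings to write each factor as a sum of its cycle part and a complementary ``contractible'' part $\tau(B)\oplus B$ carrying an isomorphism $\partial$; tensoring and expanding via the Leibniz rule for $\partial$ exhibits $C\otimes C'$ as a direct sum of a piece computing $Z(C)\otimes Z(C')$ in homology and several acyclic pieces built from the contractible parts. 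Because $R$ is merely commutative and we make no flatness hypothesis, I cannot invoke the Künneth theorem directly; instead the explicit splitting must do all the work, yielding $H(C\otimes C')\cong H(C)\otimes H(C')$ together with projectivity of $Z(C\otimes C')$ and $H(C\otimes C')$, since tensor products and direct summands of projective $R$-modules are again projective.

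Once $C^{\otimes n}$ is known to satisfy assumption (A) with projective $Z$ and $H$, and $C$ does as well, Proposition \ref{P:lift} applies verbatim to the pair $(C^{\otimes n}, C)$, giving that
$${\mathcal H}_{C^{\otimes n},C}: H(\Hom(C^{\otimes n},C))\rto \Hom(H(C^{\otimes n}),H(C))$$
is an isomorphism of graded $R$-modules. It then remains to identify $\Hom(H(C^{\otimes n}),H(C))$ with $\Hom(H(C)^{\otimes n},H(C))$, which is immediate from the isomorphism $H(C^{\otimes n})\cong H(C)^{\otimes n}$ established in the inductive step. Composing the two isomorphisms yields the claim.

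The step I expect to be the main obstacle is the inductive verification that $C\otimes C'$ satisfies assumption (A): the difficulty is purely bookkeeping with the Koszul signs in the differential $\partial(c\otimes c')=\partial_C c\otimes c'+(-1)^{|c|}c\otimes\partial_{C'}c'$, ensuring that the candidate splittings $\tau$ and $\sigma$ for the tensor product are genuinely compatible with this differential and that the resulting decomposition of $Z(C\otimes C')$ and $B(C\otimes C')$ is direct. Since everything is done by hand rather than via Künneth, care is needed to confirm that no hidden torsion or non-split extension arises; the projectivity hypotheses are precisely what guarantee it cannot.
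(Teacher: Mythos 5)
Your proposal is correct in substance but takes a genuinely different route from the paper. You set out to prove that $C^{\otimes n}$ itself satisfies assumption (A) with projective cycles and homology---by decomposing each tensor factor into its homology (with zero differential) plus a contractible piece---and then apply Proposition \ref{P:lift} once to the pair $(C^{\otimes n},C)$. The paper never establishes assumption (A) for $C^{\otimes n}$ at all: it inducts on $n$ using the adjunction $\Hom(C^{\otimes n},C)\cong\Hom(C,\Hom(C^{\otimes n-1},C))$, propagates the hypothesis through the final clause of Proposition \ref{P:lift} (that $\Hom$ of two dgmodules satisfying (A) again satisfies (A)), and simply cites the K\"unneth theorem from Mac Lane for the identification $H(C^{\otimes n})\cong H(C)^{\otimes n}$. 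So the paper's induction lives entirely on the $\Hom$ side and outsources the tensor bookkeeping to a classical theorem, while yours lives on the tensor side and reproves that theorem by hand. Your sketch of the tensor step is viable: assumption (A) splits $C$ as $\sigma(H(C))\oplus\bigl(B(C)\oplus\tau_C(B(C))\bigr)$ (note: homology part plus contractible part, not ``cycle part'' plus contractible part---$Z(C)$ overlaps $B(C)$), the second summand carries the contracting homotopy $h(b+\tau_C(b'))=\tau_C(b)$ with $\partial h+h\partial=\id$, and a contractible complex tensored with anything stays contractible via $h\otimes\id$; this yields $H(C\otimes C')\cong H(C)\otimes H(C')$ together with the required splittings and projectivity, since tensor products and summands of projectives are projective.

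Two corrections to your reasoning, neither fatal. First, your stated reason for refusing to invoke K\"unneth (``$R$ is merely commutative and we make no flatness hypothesis'') is a misconception: projectivity of $Z(C)$ and $H(C)$ forces $B(C)$, and hence each $C_n=\sigma(H_n(C))\oplus B_n(C)\oplus\tau_C(B_{n-1}(C))$, to be projective, hence flat; this is exactly the hypothesis under which the K\"unneth theorem of Mac Lane holds over an arbitrary commutative ring, with vanishing Tor term because $H(C)$ is flat. The paper cites it in precisely this way, so the theorem you avoid is in fact available, and your hand-made argument is a (correct) replacement rather than a necessity. Second, your justification that $0\to Z(C)\to C\xrightarrow{\partial_C}B(C)\to 0$ splits ``because $Z(C)$ is projective and $B(C)$ is a submodule that is a quotient of $C$'' is a non sequitur: a submodule of a projective module need not be a summand. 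The correct order of deduction, which is the paper's, is that projectivity of $H(C)$ splits $0\to B(C)\to Z(C)\to H(C)\to 0$, so $B(C)$ is a summand of the projective module $Z(C)$, hence projective, and then the surjection $\partial_C\colon C\to B(C)$ onto a projective module splits.
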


\begin{proof} Note that if $H(C)$ is projective, then the short exact sequence $0\rto B(C)\rto Z(C) \rto H(C)\rto 0$ splits and $B(C)$ is projective because it is a direct summand of $Z(C)$ which is projective. Consequently, the short exact sequence $0\rto Z(C)\rto C\rto B(C)\rto 0$ splits and $C$ satisfies assumption (A).

The proof is by induction on $n$, applying recursively Proposition  \ref{P:lift}.
For $n=1$, the corollary amounts to the statement of Proposition \ref{P:lift}, with $D=C$. Moreover
$\Hom(C,C)$ satisfies assumption (A).

Because $C$ is a dgmodule such that $Z(C)$ and $H(C)$ are projective, the K\"unneth formula applies (see e.g. \cite{MacLane95}), that is, for every $n$ one has $H(C^{\otimes n})\simeq H(C)^{\otimes n}$.

Let $n>1$. Assume that  the map $H(\Hom(C^{\otimes n-1},C))\rto \Hom(H(C^{\otimes n-1}),H(C))$ is an isomorphism and that $\Hom(C^{\otimes n-1},C)$
satisfies assumption (A). Then the following sequence of maps is a sequence of isomorphisms:
\begin{multline*}
H(\Hom(C^{\otimes n}, C))\simeq H(\Hom(C,\Hom(C^{\otimes n-1},C)))\rto \Hom(H(C),H(\Hom(C^{\otimes n-1},C)))\rto \\
\Hom(H(C),\Hom(H(C^{\otimes n-1}),H(C)))\simeq \Hom(H(C),\Hom(H(C)^{\otimes n-1},H(C)))\simeq \Hom(H(C)^{\otimes n},H(C)))
\end{multline*}
and $\Hom(C^{\otimes n},C)=\Hom(C,\Hom(C^{\otimes n-1},C))$ satisfies assumption (A).
\end{proof}

\section{Obstruction to $A_\infty$-structures} \label{sec:obstruction}

This section is devoted to the obstruction theorem. We first introduce $A_\infty$-algebras, $A_r$-algebras, Hochschild cohomology and prove Theorem \ref{T:main}.

\subsection{$A_\infty$-algebras} \label{ssec:Ainfty}

There are mainly two equivalent definitions of $A_\infty$-algebras.

\begin{defn}\label{D:Ainfty} Let $V$ be a graded $R$-module. We denote by $T^c(sV)$ the free conilpotent coalgebra generated by the suspension of $V$. An {\sl $A_\infty$-algebra structure} on $V$ is a degree $-1$ coderivation $\partial$ on $T^c(sV)$ of square $0$. Namely, the universal property of $T^c(sV)$ implies that $\partial$ is  determined by the sequence $\partial_n: (sV)^{\otimes n}\rto sV$ for $n\geq 1$, obtained as the composite
$$(sV)^{\otimes n}\hookrightarrow T^c(sV)\xrightarrow{\partial} T^c(sV)\epi sV.$$
Conversely, given a sequence $\partial_n\in\Hom_{-1}((sV)^{\otimes n}, sV)$ the unique coderivation on $T^c(sV)$ extending $\partial$ is given by
$$\partial(sv_1\otimes\ldots\otimes sv_n)=\sum_{j=1}^n\sum_{k=1}^{n+1-j} (-1) ^{|sv_1|+\ldots+|sv_{k-1}|} sv_1\otimes\ldots\otimes sv_{k-1}\otimes\partial_j(sv_k\otimes\ldots\otimes sv_{k+j-1})\otimes\ldots\otimes sv_n.$$
\end{defn}

\begin{prop} Let $V$ be a graded $R$-module. The following definitions are equivalent. An $A_\infty$-algebra structure on $V$ is
\begin{itemize}
\item[a)] a collection of elements $\partial_i\in\Hom_{-1}((sV)^{\otimes i},sV), i\geq 1,$ satisfying with the notation of Proposition \ref{P:unsignedcirc}
$$\forall n\geq 1,\ \sum_{i+j=n+1} \partial_i\star \partial_j=0, \text{ or }$$
\item[b)] a collection of elements  $m_i\in\Hom_{i-2}((V)^{\otimes i},V), i\geq 1$ satisfying, with the notation of Theorem \ref{P:end}
\begin{equation}\label{E:Ainfty}
\forall n\geq 1,\ \sum_{i+j=n+1} m_i\circ m_j=\sum_{i+j=n+1} (-1)^{i-1}\sum_{k=1}^i(-1)^{(j-1)(k-1)}m_i\circ_k m_j=0.
\end{equation}
\end{itemize}
\end{prop}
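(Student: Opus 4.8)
The plan is to prove the three-way equivalence $\ref{D:Ainfty}\Leftrightarrow(a)\Leftrightarrow(b)$ in two steps: first comparing the coderivation condition of Definition \ref{D:Ainfty} with (a), then comparing (a) with (b) through the Getzler--Jones isomorphism $\Theta$ of \eqref{E:Theta}.

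For the step $\ref{D:Ainfty}\Leftrightarrow(a)$, I would use the universal property of the cofree conilpotent coalgebra $T^c(sV)$: a coderivation of $T^c(sV)$ is uniquely determined by its corestriction $\pi\colon T^c(sV)\epi sV$ followed by the coderivation, so it vanishes if and only if this corestriction does. Because $\partial$ has odd degree, a direct expansion of the coproduct shows that the two cross terms cancel (so that \emph{no division by $2$ is needed}), and hence $\partial^2$ is again a coderivation. Therefore $\partial^2=0$ if and only if $\pi\circ\partial^2$ vanishes, and I would compute this corestriction on $(sV)^{\otimes n}$ directly from the explicit formula of Definition \ref{D:Ainfty}. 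Composing that formula with itself, the outer application must be the corestricted piece $\partial_i$ absorbing all remaining factors, which forces an inner $\partial_j$ inserted in one of the $i$ available slots with $i+j=n+1$; the sign $(-1)^{|sv_1|+\dots+|sv_{k-1}|}$ produced when the degree $-1$ map $\partial_j$ crosses the first $k-1$ tensor factors is exactly the Koszul sign carried by the insertion $\circ_k$. Thus $\pi\circ\partial^2|_{(sV)^{\otimes n}}=\sum_{i+j=n+1}\partial_i\star\partial_j$, with $\star$ the unsigned pre-Lie product of Proposition \ref{P:unsignedcirc}, and the equivalence follows.

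For the step $(a)\Leftrightarrow(b)$, I would set $m_i:=\Theta(\partial_i)$. Since $\partial_i\in\Hom_{-1}((sV)^{\otimes i},sV)$, the map \eqref{E:Theta} gives $m_i\in\Hom_{i-2}(V^{\otimes i},V)$, matching the degree prescribed in (b). Applying the comparison \eqref{E:comparecirc} with $F=\partial_i$ and $G=\partial_j$ yields $m_i\circ m_j=\Theta(\partial_i)\circ\Theta(\partial_j)=\Theta(\partial_i\star\partial_j)$, where $\circ$ is the weight graded pre-Lie product of Proposition \ref{P:end}. Unfolding $\circ$ by Corollary \ref{C:signcirc} and using $|m_j|=2j-3$, so that $(-1)^{|m_j|(i-1)}=(-1)^{i-1}$, recovers precisely the signed expression written in the second equality of \eqref{E:Ainfty}. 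Summing over $i+j=n+1$ gives $\sum_{i+j=n+1}m_i\circ m_j=\Theta\bigl(\sum_{i+j=n+1}\partial_i\star\partial_j\bigr)$, and since $\Theta$ is a bijection the left-hand side vanishes for all $n$ if and only if the pre-Lie relations of (a) hold.

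The step I expect to be the main obstacle is the sign bookkeeping in $\ref{D:Ainfty}\Leftrightarrow(a)$: one must verify that the sign $(-1)^{|sv_1|+\dots+|sv_{k-1}|}$ in the coderivation formula coincides with the Koszul sign implicit in the operadic insertion $\partial_i\circ_k\partial_j$ on $\End(sV)$, which works out only because each $\partial_j$ has degree $-1$. Once this is checked, the passage $(a)\Leftrightarrow(b)$ is essentially formal, being a direct application of the previously established identity \eqref{E:comparecirc} together with the invertibility of $\Theta$.
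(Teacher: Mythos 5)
Your proposal is correct and follows essentially the same route as the paper: the equivalence with (a) by identifying the corestriction of $\partial^2$ on $(sV)^{\otimes n}$ with $\sum_{i+j=n+1}\partial_i\star\partial_j$, and the equivalence with (b) by setting $m_i=\Theta(\partial_i)$ and invoking relation \eqref{E:comparecirc}. Your write-up is in fact slightly more careful than the paper's, since you make explicit both the point that $\partial^2$ is itself a coderivation (so vanishing of its corestrictions suffices, with no division by $2$) and the sign check $(-1)^{|m_j|(i-1)}=(-1)^{i-1}$, which the paper leaves implicit.
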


\begin{proof} To prove part a) of the proposition, it is enough to apply Definition \ref{D:Ainfty} and compute

\begin{multline*}
(\partial^2)_n(sv_1\otimes\ldots\otimes sv_n)= \\
\sum_{i+j=n+1}\sum_{k=1}^{n+1-j} (-1) ^{|sv_1|+\ldots+|sv_{k-1}|} \partial_i(sv_1\otimes\ldots\otimes sv_{k-1}\otimes\partial_j(sv_k\otimes\ldots\otimes sv_{k+j-1})\otimes\ldots\otimes sv_n)=\\
\sum_{i+j=n+1}(\partial_i\star \partial_j)(sv_1\otimes\ldots\otimes sv_n).
\end{multline*}

To prove part b) we use the isomorphism $\Theta$ defined in (\ref{E:Theta}), setting $m_i$ to be $\Theta(\partial_i)$.  By relation (\ref{E:comparecirc}), one gets
$$\partial^2=0 \Longleftrightarrow \forall n, \sum_{i+j=n+1} m_i\circ m_j=0.$$
For the definition of $m_i\circ m_j$ we refer to Theorem \ref{P:end}.
\end{proof}

\begin{rem} Note that there exist different sign conventions for the definition of an $A_\infty$-algebra. 
Choosing the bijection
$$\widetilde\Theta: \End(sV)\rightarrow \End(V)$$
defined by $\widetilde\Theta(F)=s^{-1}Fs^{\otimes n}$ and letting $\widetilde m_i=\widetilde\Theta(b_i)$ one gets the original definition of J. Stasheff in \cite{Stasheff63}: the collection of operations $\tilde m_i:A^{\otimes i}\rightarrow A$ of degree $i-2$ satisfies the relation
\begin{equation}\label{E:Ainftygeom}
\forall n\geq 1,\sum_{i+j=n+1} (-1)^{jn}\sum_{k=1}^i(-1)^{k(j-1)}\tilde m_i\circ_k \tilde m_j=0.
\end{equation}
This is equivalent to our definition, because 
$$m_i=(-1)^{\frac{i(i-1)}{2}} \widetilde m_i.$$
\end{rem}

\begin{defn}Let $r>0$ be an integer. A  graded $R$-module $V$ is  an {\sl $A_r$-algebra} if there exists
a collection of elements $m_i\in\Hom_{i-2}(V^{\otimes i},V),$ for $1\leq i\leq r$, such that 
\begin{equation*}
\forall 1\leq n\leq r,\ \sum_{i+j=n+1} m_i\circ m_j=0.
\end{equation*}
\end{defn}

\begin{rem}
Note that $V$ is an $A_1$-algebra if and only if $V$ is a dgmodule, with differential $m_1$ of degree $-1$. 
Recall from Proposition \ref{P:partialcirc} that the induced differential $\partial$ on $\End(V)$ satisfies $\partial f=[m_1,f]$.

The dgmodule $V$ is an $A_2$-algebra  if and only if there exists an element $m_2\in\End^2_0(V)$ such that $\partial m_2=0$, that is, $m_2$ is a morphism of dgmodules.

An $A_3$-algebra is an $A_2$-algebra such that $m_2$ is associative up to homotopy: there exists $m_3\in \End^3_1(V)$ such that $\partial m_3=-m_2\circ m_2.$
Since $m_2$ has odd weight and $\partial m_2=0$ one gets from section \ref{sec:homology} that $m_2$ defines a map $\overline{m_2}\in\End(H(V))$
such that $[\overline{m_2},\overline{m_2}]=\overline{[m_2,m_2]}=2\,\overline{m_2\circ m_2}=0$. Namely the graded $R$-module $H(V)$ is a graded associative algebra.
\end{rem}

\subsection{Hochschild cohomology}

In this section we recall some facts concerning Hochschild cohomology of graded associative algebras. Let $(A,m_2)$ be a graded associative algebra.  Recall that  $m_2\in \End^2_0(A)$ is associative if and only if  $m_2\circ m_2=0$.

\begin{lem} Let $(A,m_2)$ be a graded associative algebra. The map ${\mathrm d}=[m_2,-]: \End^n_i(A)\rto \End^{n+1}_i(A)$  has weight degree $1$  and is a differential. The complex so obtained is the Hochschild cochain complex of $A$ and its cohomology is called the Hochschild cohomology of $A$. Note that the cohomology is bigraded
and is denoted by $HH^n_i(A)$ when the grading needs to be specified.
\end{lem}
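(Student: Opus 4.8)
The plan is to check three things — the bidegree of $d$, its weight degree, and the identity $d^2=0$; the identification of the resulting complex with the Hochschild cochain complex of $A$ is then just a matter of naming.

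First I would record the bidegrees, which is routine bookkeeping. Since $m_2\in\End^2_0(A)$, the composition rule of Definition \ref{D:preLiesystem} sends $\End^2_0(A)\otimes\End^n_i(A)$ and $\End^n_i(A)\otimes\End^2_0(A)$ into $\End^{n+1}_i(A)$; hence for $f\in\End^n_i(A)$ both $m_2\circ f$ and $f\circ m_2$ lie in $\End^{n+1}_i(A)$, and so does $d(f)=[m_2,f]$, matching the stated target. For the weight degree, note $|m_2|=0+2-1=1$, and the weight-graded bracket of Corollary \ref{C:signcirc} is additive in the weight, so $|d(f)|=|m_2|+|f|=|f|+1$. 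Thus $d$ raises the weight by one.

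The heart of the argument is $d^2=0$, i.e.\ $[m_2,[m_2,f]]=0$ for every $f$. The key observation is that $m_2$ has \emph{odd} weight, $|m_2|=1$, so Lemma \ref{L:preLie_formula} applies with $g=m_2$. The second identity in (\ref{E:J2}) then reads $-[m_2,[m_2,f]]=-[m_2\circ m_2,f]$, that is,
\[
[m_2,[m_2,f]]=[m_2\circ m_2,f].
\]
Since associativity of $m_2$ is precisely the condition $m_2\circ m_2=0$ (recalled just before the statement), the right-hand side vanishes and $d^2=0$ follows.

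The point demanding the most care — and the reason for routing through Lemma \ref{L:preLie_formula} rather than the graded Jacobi identity of Proposition \ref{P:Jacobi} — is the passage $[m_2,[m_2,f]]=[m_2\circ m_2,f]$. The naive Jacobi computation yields instead the bracket $[m_2,m_2]=2\,m_2\circ m_2$, which would force dividing by $2$ and hence a no-$2$-torsion hypothesis on the modules $\End^n_i(A)$. Lemma \ref{L:preLie_formula} establishes the underlying relation $(f\circ g)\circ g=f\circ(g\circ g)$ for odd-weight $g$ directly from the pre-Lie \emph{system} axioms (\ref{E:syst1})--(\ref{E:syst2}), with no factor of $2$ and no torsion assumption, which is exactly what makes the Hochschild complex well defined over an arbitrary commutative ring $R$.
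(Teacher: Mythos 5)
Your proof is correct and takes essentially the same route as the paper: both reduce $\mathrm{d}^2=0$ to relation (\ref{E:J2}) of Lemma \ref{L:preLie_formula} applied with $g=m_2$ (which has odd weight $1$), combined with the associativity condition $m_2\circ m_2=0$. The paper's one-line proof uses the equality $[m_2,[m_2,f]]=-[f,m_2\circ m_2]$ while you use the parallel equality $[m_2,[m_2,f]]=[m_2\circ m_2,f]$ from the same chained relation, and your degree bookkeeping plus the remark that this avoids any $2$-torsion hypothesis is precisely the point the paper intends.
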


\begin{proof}From relation (\ref{E:J2}) one has $\dd^2(f)=[m_2,[m_2,f]]=-[f,m_2\circ m_2]=0$.
\end{proof}

When $A$ is an $A_2$-algebra, the map $\dd$ is still defined but does not satisfy $\dd^2=0$. Nevertheless, we have the following lemma:

\begin{lem}\label{L:commute} Let $A$ be an $A_2$-algebra, with structure maps $m_1$ and $m_2$. The maps $\partial=[m_1,-]:\End^n_i(A)\rto \End^n_{i-1}(A)$ and $\mathrm d=[m_2,-]:\End^n_i(A)\rto \End^{n+1}_i(A)$ satisfy
\[
\begin{cases}
\partial^2=0, \\
\partial \dd=-\dd\partial.
\end{cases}
\]
\end{lem}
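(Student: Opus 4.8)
The plan is to prove both identities by exploiting the fact that the bracket $[-,-]$ is a graded Lie bracket coming from the weight-graded pre-Lie structure on $\End(V)$, and that $m_1$ and $m_2$ are specific structure elements. The first identity $\partial^2=0$ is the simplest: since $m_1\in\End^1_{-1}(V)$ has odd weight $|m_1|=-1$, Lemma \ref{L:preLie_formula} applies directly with $g=m_1$. Indeed, by relation (\ref{E:J2}) one has $[m_1,[m_1,f]]=-[f,m_1\circ m_1]$, and the $A_2$-relation for $n=1$ gives $m_1\circ m_1=0$. Hence $\partial^2(f)=[m_1,[m_1,f]]=-[f,0]=0$. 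So I would dispatch $\partial^2=0$ first, in one line, by quoting Lemma \ref{L:preLie_formula} and the defining relation $m_1\circ m_1=0$ of an $A_2$-algebra.

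For the anticommutation $\partial\dd=-\dd\partial$, the natural approach is to unwind the two composites using the graded Jacobi identity of Proposition \ref{P:Jacobi} (in its weight-graded incarnation from Corollary \ref{C:signcirc}). Writing everything out, $\partial\dd(f)=[m_1,[m_2,f]]$ and $\dd\partial(f)=[m_2,[m_1,f]]$, so the claim amounts to
\begin{equation*}
[m_1,[m_2,f]]+[m_2,[m_1,f]]=0.
\end{equation*}
The Jacobi identity relates these two nested brackets to a third term $[f,[m_1,m_2]]$ (up to signs), so the key computation is to show that $[m_1,m_2]=0$, i.e. that $m_2$ is a cycle for $\partial$. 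This is exactly the $A_2$-relation for $n=2$: the defining equation $\sum_{i+j=3}m_i\circ m_j=m_1\circ m_2+m_2\circ m_1=0$ says precisely that $\partial m_2=[m_1,m_2]=m_1\circ m_2-(-1)^{|m_1||m_2|}m_2\circ m_1=0$, since $|m_1|=-1$ and $|m_2|=1$ so the sign $(-1)^{|m_1||m_2|}=(-1)^{-1}=-1$ turns the bracket into the sum $m_1\circ m_2+m_2\circ m_1$.

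Concretely, I would invoke the graded Jacobi relation from Proposition \ref{P:Jacobi} applied to the triple $(m_1,m_2,f)$, being careful to use the weight-graded signs $\epsilon$ built from weights $|m_1|=-1$, $|m_2|=1$, and $|f|=i$. The Jacobi identity expresses a signed sum of $[m_1,[m_2,f]]$, $[m_2,[f,m_1]]$, and $[f,[m_1,m_2]]$ as zero; substituting $[m_1,m_2]=0$ kills the last term and, after rewriting $[m_2,[f,m_1]]$ in terms of $[m_2,[m_1,f]]$ via graded antisymmetry, yields the desired relation $[m_1,[m_2,f]]=-[m_2,[m_1,f]]$. The main obstacle I anticipate is purely bookkeeping: getting all the Koszul signs in the weight-graded Jacobi identity to cancel correctly, since the weights $-1$ and $1$ are both odd and the parity of $f$ enters every term. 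I would therefore organize the sign computation around the observation that $m_1$ and $m_2$ both have odd weight, so each produces a sign $(-1)^{|f|}$ when commuted past $f$, and verify that the two composites pick up opposite overall signs.
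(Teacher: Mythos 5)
Your proposal is correct and follows essentially the same route as the paper: $\partial^2=0$ is deduced from relation (\ref{E:J2}) of Lemma \ref{L:preLie_formula} together with $m_1\circ m_1=0$, and $\partial\dd=-\dd\partial$ from the graded Jacobi identity of Proposition \ref{P:Jacobi} applied to $(m_1,m_2,f)$ combined with $[m_1,m_2]=m_1\circ m_2+m_2\circ m_1=0$, exactly as in the paper. Your sign bookkeeping (both $m_1$ and $m_2$ having odd weight, and the antisymmetry rewriting of $[m_2,[f,m_1]]$) matches the paper's computation.
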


\begin{proof} Note that since $m_1$ has weight $-1$ and $m_2$ has weight $1$, they are both elements of odd weight. Hence, equality $\partial^2=0$ is a consequence of $m_1\circ m_1=0$ and relation (\ref{E:J2}).
Let $f$ be an element of weight  $i$  in $\End(A)$. Proposition  \ref{P:Jacobi} and  relation $m_1\circ m_2+m_2\circ m_1=0=[m_1,m_2]$ imply that

$\partial\dd(f)=[m_1,[m_2,f]]=(-1)^{i+1}\left(-[m_2,[f,m_1]]+ (-1)^i[f,[m_1,m_2]]\right)=-[m_2,[m_1,f]]=-\dd\partial(f).$
\end{proof}

\subsection{Obstruction theory} \label{ssec:obstruction}

\begin{thm}\label{T:main}Let $r\geq 3$. Let $A$ be a dgmodule such that $H(A)$ and $Z(A)$ are graded projective $R$-modules. Assume $A$ is an $A_r$-algebra, with structure maps $m_i\in\End^i_{i-2}(A)$ for $1\leq i\leq r$.
The obstruction to lift the $A_{r-1}$-structure of $A$ to an $A_{r+1}$-structure lies in $HH^{r+1}_{r-2}(H(A))$.
\end{thm}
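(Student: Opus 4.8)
The plan is to set up the standard obstruction-theoretic argument adapted to the pre-Lie system formalism developed above, being careful to track weights so that the obstruction lands in the stated bidegree $HH^{r+1}_{r-2}(H(A))$.

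First I would recall the structure equations. For an $A_r$-algebra the relations $\sum_{i+j=n+1} m_i\circ m_j=0$ hold for $1\le n\le r$. To lift to an $A_{r+1}$-structure one must produce $m_{r+1}\in\End^{r+1}_{r-1}(A)$ satisfying the relation at level $n=r+1$, namely
\[
\sum_{i+j=r+2} m_i\circ m_j=0.
\]
Separating off the terms involving $m_1$ (which give $\partial m_{r+1}$) and the unknown $m_{r+1}$, this reads $\partial m_{r+1}=-O_{r+1}$ where
\[
O_{r+1}=\sum_{\substack{i+j=r+2\\ i,j\ge 2}} m_i\circ m_j
\]
is built entirely from the known maps $m_2,\dots,m_r$. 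So $m_{r+1}$ exists as a solution at level $r+1$ precisely when the already-defined obstruction cochain $O_{r+1}$ is a $\partial$-boundary in $\End^{r+1}(A)$.

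Next I would verify that $O_{r+1}$ is a $\partial$-cocycle, \emph{i.e.} $\partial O_{r+1}=0$, using the graded Jacobi/pre-Lie relations from Proposition~\ref{P:Jacobi} together with the lower-order structure equations; this is the classical computation showing the obstruction is closed, and it is where the pre-Lie system identities of Lemma~\ref{L:preLie_formula} do the work without any $2$-torsion hypothesis. Here $O_{r+1}$ has arity $r+1$ and, since each $m_i$ has degree $i-2$, the composite $m_i\circ m_j$ with $i+j=r+2$ has degree $(i-2)+(j-2)=r-2$, so $O_{r+1}\in\End^{r+1}_{r-2}(A)$, consistent with the target bidegree. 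I would then pass to homology: because $Z(A)$ and $H(A)$ are projective, Corollary~\ref{C:lift} gives $H(\End^{r+1}(A))\cong \End^{r+1}(H(A))$, so the $\partial$-cohomology class $[O_{r+1}]$ corresponds to a well-defined element of $\End^{r+1}(H(A))$ in the appropriate degree. Using Lemma~\ref{L:commute} (that $\dd=[m_2,-]$ anticommutes with $\partial$ and squares to $0$ on homology), I would check that this class is a $\dd$-cocycle, hence defines a Hochschild cohomology class in $HH^{r+1}_{r-2}(H(A))$; its vanishing is equivalent to $[O_{r+1}]=0$ in $H(\End^{r+1}(A))$, which by Proposition~\ref{P:lift}(b) means $O_{r+1}=\partial m_{r+1}$ for some $m_{r+1}$, giving the $A_{r+1}$-lift.

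The main obstacle I anticipate is the bookkeeping that shows the obstruction class is \emph{intrinsic}, \emph{i.e.} independent of the choices of lifts and genuinely a Hochschild class rather than just a $\partial$-cocycle. Concretely, lifting from $A_{r-1}$ to $A_{r+1}$ involves choosing $m_r$ (the $A_r$-structure) and the chain-level representatives, and one must show that $O_{r+1}$ is a $\dd$-cocycle modulo $\dd$-coboundaries so that its class in $HH^{r+1}_{r-2}(H(A))$ does not depend on the intermediate choices; this requires combining $\partial O_{r+1}=0$, the anticommutation $\partial\dd=-\dd\partial$, and the isomorphism of Corollary~\ref{C:lift} carefully, and is precisely where the weight-graded pre-Lie formalism and the torsion-free-independent Lemma~\ref{L:preLie_formula} are indispensable. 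The sign-tracking through the weight grading is routine but error-prone, and is the part I would carry out most carefully.
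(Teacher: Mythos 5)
Your setup follows the paper's proof closely and correctly: you define the obstruction cochain $\mathcal O_{r+1}=\sum_{i+j=r+2,\, i,j\geq 2} m_i\circ m_j\in\End^{r+1}_{r-2}(A)$, prove $\partial\mathcal O_{r+1}=0$ via the Jacobi and pre-Lie relations together with Lemma \ref{L:preLie_formula}, pass to a class $\overline{\mathcal O_{r+1}}\in\End^{r+1}_{r-2}(H(A))$ via Corollary \ref{C:lift}, and check it is a $\dd$-cocycle. But your final step contains a genuine error: you assert that vanishing of the Hochschild class of $\overline{\mathcal O_{r+1}}$ is \emph{equivalent} to $[\mathcal O_{r+1}]=0$ in $H(\End^{r+1}(A))$. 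This is false. Vanishing in $HH^{r+1}_{r-2}(H(A))$ only means $\overline{\mathcal O_{r+1}}=\dd u$ for some $u\in\End^{r}_{r-2}(H(A))$; it does not make $\overline{\mathcal O_{r+1}}$ zero as an element of $\End^{r+1}_{r-2}(H(A))\simeq H(\End^{r+1}(A))$, since a $\dd$-coboundary need not vanish. With $m_r$ held fixed, $\mathcal O_{r+1}$ need not be a $\partial$-boundary, so Proposition \ref{P:lift}(b) does not apply and no $m_{r+1}$ is produced. Had your equivalence been true, the obstruction would live in the (generally much larger) group $\End^{r+1}_{r-2}(H(A))$ rather than in Hochschild cohomology, and the theorem would concern lifting the $A_r$-structure; note that the statement is deliberately about lifting the $A_{r-1}$-structure, a point your argument cannot account for because it never touches $m_r$.

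The missing idea is that one must \emph{modify} $m_r$, and this is exactly what the paper does. Given $\dd u=\overline{\mathcal O_{r+1}}$, Corollary \ref{C:lift} lifts $u$ to $m'_r\in\End^{r}_{r-2}(A)$ with $\partial m'_r=0$ and $\overline{m'_r}=u$. Writing $\mathcal O_{r+1}=[m_2,m_r]+\sum_{i+j=r+2,\, i,j>2} m_i\circ m_j$, one replaces $m_r$ by $m_r-m'_r$; this leaves all $A_r$-relations intact (the relations in levels $\leq r$ involve $m_r$ only through $\partial m_r$, and $\partial m'_r=0$), while it changes the obstruction cocycle by $-[m_2,m'_r]$, whose homology class is precisely $\dd u=\overline{\mathcal O_{r+1}}$. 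The modified obstruction is therefore null-homologous, and only at that point does Corollary \ref{C:lift} yield $m_{r+1}\in\End^{r+1}_{r-1}(A)$ with $\partial m_{r+1}=[m_2,m'_r-m_r]-\sum_{i+j=r+2,\, i,j>2} m_i\circ m_j$, so that $\{m_1,\dots,m_{r-1},\, m_r-m'_r,\, m_{r+1}\}$ is an $A_{r+1}$-structure extending the given $A_{r-1}$-structure. Your closing remarks about the class being ``intrinsic'' gesture at the dependence on the choice of $m_r$, but you treat this as a well-definedness issue to be checked afterwards; in fact the freedom to re-choose $m_r$ is the very mechanism that makes the Hochschild class, rather than the class in $\End^{r+1}_{r-2}(H(A))$, the correct obstruction.
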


\begin{proof}By assumption, one has
$\forall n\leq r,\ \sum_{i+j=n+1} m_i\circ m_j=0$ which writes
$$\forall n\leq r,\quad \partial m_n=-\sum\limits_{\substack{i+j=n+1,\\ i,j>1}} m_i\circ m_j.$$
The weight of $m_i$ is $i-2+i-1=2i-3$, thus odd.
Let 
$$\mathcal O_{r+1}=\sum\limits_{\substack{i+j=r+2, \\ i,j>1}} m_i\circ m_j \in \Hom_{r-2}(A^{\otimes{r+1}},A).$$
Proposition \ref{P:partialcirc} gives
\[
 \partial \mathcal O_{r+1}=\sum\limits_{\substack{a+b+c=r+3, \\ a,b,c>1}} -(m_a\circ m_b)\circ m_c+m_a\circ (m_b\circ m_c)
\]
The sum splits into the following sums:
\begin{itemize}
 \item[]  If $a,b,c\in\{2,\ldots,r\}$ are distinct integers, one gets the twelve terms of the Jacobi relation, i.e.
$$\sum\limits_{\substack{1< a<b<c\leq r\\ a+b+c=r+3}}[m_a,[m_b,m_c]]+[m_b,[m_c,m_a]]+[m_c,[m_a,m_b]]=0.$$
\item[] Regrouping the terms where $a=b$ and $c\not=a$ or $a=c$ and $b\not=a$, one gets the four terms of the pre-Lie relation of the form, 
\[\sum\limits_{\substack{\alpha\not=\gamma, \alpha,\gamma>1\\ 2\alpha+\gamma=r+3}}
-(m_\alpha\circ m_\alpha)\circ m_\gamma+
m_\alpha\circ (m_\alpha\circ m_\gamma)-(m_\alpha\circ m_\gamma)\circ m_\alpha+ m_\alpha\circ (m_\gamma\circ m_\alpha)=0
\]
\item[] If $b=c\in\{2,\ldots,r\}$, relation (\ref{E:J1}) implies
$$\sum\limits_{\substack {1\leq a, 1<b\leq r\\ a+2b=r+3}}-(m_a\circ m_b)\circ m_b+m_a\circ (m_b\circ m_b)=0.$$
\end{itemize}
Consequently $\partial\mathcal O_{r+1}=0$ and $\mathcal O_{r+1}$ gives rise to an element $\overline{\mathcal{O}_{r+1}}\in \End^{r+1}_{r-2}(H(A))$.
Again, by splitting the sum,
\[
 \dd {\mathcal{O}_{r+1}}=\sum_{a+b=r+2, a,b>1}[m_2,m_a\circ m_b]
\]
and using the relation (\ref{E:J2}) one gets
\begin{itemize}
 \item[] If $a=2$ or $b=2$, then $[m_2,[m_2,m_r]]=[m_2\circ m_2,m_r]=-[\partial m_3,m_r]$;
\item[] If $a\not=b, a,b>2$, then  $[m_2,[m_a,m_b]]=-[m_a,[m_2,m_b]]-[m_b,[m_2,m_a]]$;
\item[] If $a=b, a>2$, then $[m_2,m_a\circ m_a]=-[m_a,[m_2,m_a]).$
\end{itemize}
Thus, on the one hand,
\[
 \dd {\mathcal{O}_{r+1}}=-[\partial m_3,m_r]-\sum_{a+b=r+2, a,b>2}[m_a,[m_2,m_b]].
\]
On the other hand, by splitting the sum and using the computation of $\partial\mathcal O_{r+1}$, one gets that
\begin{align*}
 \partial(\sum\limits_{\substack{ a+b=r+3, \\ a,b>2}} m_a\circ m_b)&=\sum\limits_{\substack{a+b=r+3, \\ a,b>2}} (\partial m_a)\circ m_b-m_a\circ\partial(m_b)\\
&=[\partial m_3,m_r]+\sum\limits_{\substack{a+b=r+2, \\ a,b>2}}- [m_2,m_b]\circ m_a+m_a\circ[m_2,m_b] \\
&=[\partial m_3,m_r]+\sum\limits_{\substack{a+b=r+2, \\ a,b>2}}[m_a,[m_2,m_b]]= -\dd {\mathcal{O}_{r+1}}
\end{align*}
As a consequence $\dd(\overline{\mathcal{O}_{r+1}})=0$. 
If the class of $\overline{\mathcal{O}_{r+1}}$ vanishes  in  $HH^{r+1}_{r-2}(H(A))$, then there exists $u\in \End^{r}_{r-2}(H(A))$ such that $\dd u=\overline{\mathcal{O}_{r+1}}$.

The hypotheses made on the $R$-module $A$ allow us to apply Corollary \ref{C:lift}. There exists 
$m'_r\in \End^r_{r-2}(A)$ such that $\partial m'_r=0$ and $\overline{m'_r}=u$. Moreover
$$\overline{[m_2,m'_r]}=\overline{\dd m'_r}=\dd u=\overline{\mathcal O_{r+1}}=\overline{[m_2,m_r]+\sum\limits_{\substack{i+j=r+2,\\ i,j>2}}m_i\circ m_j}.$$
By Corollary \ref{C:lift}, there exists 
$m_{r+1}\in \End^{r+1}_{r-1}(A)$ such that

$$\partial m_{r+1}=[m_2,m'_r-{m}_r]-\sum\limits_{\substack{i+j=r+2,\\ i,j>2}} m_i\circ m_j.$$

As a consequence the collection $\{m_1,\ldots,m_{r-1},m_r-m'_r,m_{r+1}\}$ is an $A_{r+1}$-structure on $A$ extending its $A_{r-1}$-structure.
\end{proof}

\begin{cor} Let $A$ be an associative algebra up to homotopy such that $H(A)$ and $Z(A)$ are graded projective $R$-modules. If $HH^{r+1}_{r-2}(H(A))=0, \forall r\geq 3$, then there exists an $A_\infty$-structure on $A$ with $m_1$ the differential of $A$ and $m_2$ its product.
\end{cor}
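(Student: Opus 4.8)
The plan is to construct the $A_\infty$-structure one arity at a time, by an induction on $r$ that feeds the output of Theorem \ref{T:main} back into its own hypotheses. The base case is the observation that an associative algebra up to homotopy is exactly an $A_3$-algebra: the differential provides $m_1$, the (homotopy-)associative product provides $m_2$ with $\partial m_2=0$, and a chosen associating homotopy provides $m_3$ with $\partial m_3=-m_2\circ m_2$. The projectivity hypotheses on $H(A)$ and $Z(A)$ are precisely those needed to apply Corollary \ref{C:lift}, and hence Theorem \ref{T:main}, at every stage of the induction.

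For the inductive step, suppose an $A_r$-structure $\{m_1,\ldots,m_r\}$ has been produced for some $r\geq 3$. Since $HH^{r+1}_{r-2}(H(A))=0$ by hypothesis, the obstruction class provided by Theorem \ref{T:main} vanishes, so the theorem returns an $A_{r+1}$-structure of the form $\{m_1,\ldots,m_{r-1},m_r-m'_r,m_{r+1}\}$. Thus every map of arity strictly less than $r$ is left unchanged, the top map $m_r$ is corrected by $m'_r$, and a fresh map $m_{r+1}$ is adjoined. Iterating from the base case yields, for each $r\geq 3$, an $A_{r+1}$-structure that restricts to the $A_{r-1}$-structure produced earlier.

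The crucial observation is that these structures stabilise in each fixed arity. For every $k\geq 3$ the arity-$k$ map receives its final correction at stage $r=k$ and is frozen thereafter, since at each later stage $r\geq k+1$ it sits inside the untouched initial segment $m_1,\ldots,m_{r-1}$; the maps $m_1$ and $m_2$ are never altered. Write $m_k^{\infty}$ for this eventual value, so that in particular $m_1^{\infty}=m_1$ is the differential and $m_2^{\infty}=m_2$ is the product of $A$.

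Finally I would verify that $\{m_k^{\infty}\}_{k\geq 1}$ satisfies every relation \eqref{E:Ainfty}. The relation indexed by $n$ involves only $m_1,\ldots,m_n$, all of which have reached their final values by the close of stage $n$; at that stage one has a bona fide $A_{n+1}$-structure whose maps of arity $\leq n$ are exactly $m_1^{\infty},\ldots,m_n^{\infty}$, and whose defining relations include the one indexed by $n$. Since $n$ is arbitrary, the limiting collection is an $A_\infty$-structure on $A$ of the required form. The only delicate part of the argument is the bookkeeping of which maps are modified and which are held fixed at each stage, which is what guarantees that the successively constructed finite structures are mutually compatible and assemble into a single coherent $A_\infty$-structure.
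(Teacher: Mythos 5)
Your proposal is correct and is precisely the argument the paper intends: the corollary is stated without proof as the iteration of Theorem \ref{T:main}, and your induction, with the key bookkeeping that each arity-$k$ map is corrected exactly once (at stage $r=k$) and frozen thereafter while $m_1$ and $m_2$ are never altered, supplies that iteration and the stabilization needed to pass to the limit. No gaps.
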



\def\cprime{$'$} \def\cprime{$'$} \def\cprime{$'$} \def\cprime{$'$}
\providecommand{\bysame}{\leavevmode\hbox to3em{\hrulefill}\thinspace}
\providecommand{\MR}{\relax\ifhmode\unskip\space\fi MR }
\providecommand{\MRhref}[2]{%
  \href{http://www.ams.org/mathscinet-getitem?mr=#1}{#2}
}
\providecommand{\href}[2]{#2}

\bigskip

\end{document}